\crefname{section}{Section}{Sections}
\crefname{subsection}{\S}{\S\S}
\crefname{subsubsection}{\S}{\S\S}
\theoremstyle{plain}
\newtheorem{lemma}{Lemma}[section]
\newtheorem{proposition}[lemma]{Proposition}
\newtheorem{corollary}[lemma]{Corollary}
\newtheorem{theorem}[lemma]{Theorem}
\newtheorem{question}[lemma]{Question}
\theoremstyle{nonumberplain}
\theoremstyle{plain}
\newtheorem{definition}[lemma]{Definition}
\newtheorem{example}[lemma]{Example}
\newtheorem{remark}[lemma]{Remark}
\newtheorem{convention}[lemma]{Convention}
\newtheorem{notation}[lemma]{Notation}
\crefname{definition}{definition}{definitions}
\crefname{ex}{example}{examples}
\crefname{remark}{remark}{remarks}
\crefname{convention}{convention}{conventions}
\crefname{notation}{notation}{notations}
\crefname{table}{table}{tables}
\crefname{lemma}{lemma}{lemmas}
\crefname{proposition}{proposition}{propositions}
\crefname{corollary}{corollary}{corollaries}
\crefname{theorem}{theorem}{theorems}
\crefname{enumi}{}{}
\crefname{assumption}{assumption}{Assumptions}
\crefname{equation}{}{}
\numberwithin{equation}{section}
\theoremstyle{nonumberplain}
\newtheorem{proof}{Proof}
\newcommand\pf[1]{\newtheorem{#1}{Proof of \Cref{#1}}}
\newcommand\pff[3]{\newtheorem{#1}{Proof of \Cref{#2}#3}}
\newcommand\bC{{\mathbb C}}
\newcommand\bQ{{\mathbb Q}}
\newcommand\bR{{\mathbb R}}
\newcommand\bS{{\mathbb S}}
\newcommand\bZ{{\mathbb Z}}
\newcommand\cS{{\mathcal S}}
\newcommand\fa{{\mathfrak a}}
\newcommand\fc{{\mathfrak c}}
\newcommand\fg{{\mathfrak g}}
\newcommand\fh{{\mathfrak h}}
\newcommand\fk{{\mathfrak k}}
\newcommand\fl{{\mathfrak l}}
\newcommand\fp{{\mathfrak p}}
\newcommand\fs{{\mathfrak s}}
\newcommand\ft{{\mathfrak t}}
\newcommand\fu{{\mathfrak u}}
\DeclareMathOperator{\id}{id}
\newcommand{\cat}[1]{\textsc{#1}}
\newcommand{\qedhere}{\mbox{}\hfill\ensuremath{\blacksquare}}
\title{Large sets of generating tuples for Lie groups}
\author{Alexandru Chirvasitu}
\begin{document}

\date{}

\newcommand{\Addresses}{{
  \bigskip
  \footnotesize

  \textsc{Department of Mathematics, University at Buffalo, Buffalo,
    NY 14260-2900, USA}\par\nopagebreak \textit{E-mail address}:
  \texttt{achirvas@buffalo.edu}

}}

\maketitle

\begin{abstract}
  We prove that for a connected, semisimple linear Lie group $G$ the spaces of generating pairs of elements or subgroups are well-behaved in a number of ways: the set of pairs of elements generating a dense subgroup is Zariski-open in the compact case, Euclidean-open in general, and always dense. Similarly, for sufficiently generic circle subgroups $H_i$, $i=1,2$ of $G$, the space of conjugates of $H_i$ that generate a dense subgroup is always Zariski-open and dense. Similar statements hold for pairs of Lie subalgebras of the Lie algebra $Lie(G)$.
\end{abstract}

\noindent {\em Key words: Lie group; Lie algebra; semisimple; discrete subgroup; Zariski topology; dense}

\vspace{.5cm}

\noindent{MSC 2020: 22E46; 22E60; 22C05; 17B20; 20G20; 14L15}

\tableofcontents

\section*{Introduction}

The general theme of the present paper is that of (topologically) generating Lie groups by pairs of elements and/or subgroups. There are a number of results to this effect in the literature:

\begin{itemize}
\item A classical theorem of Auerbach states that compact connected matrix groups are generated by pairs of elements, and in fact by ``most'' such pairs: e.g. the original \cite[Th\`eor\'eme II]{aub3} or \cite[Theorem 6.82]{hm}.
\item This was later generalized by Schreier and Ulam, who show in \cite{su} that for a compact, connected metrizable group $G$, the subset of $G\times G$ consisting of pairs that do {\it not} generate $G$ topologically is of $1^{st}$ Baire category. Or to say it differently, the set of generating pairs is {\it residual}.
\item On the other hand, for {\it non-abelian} compact connected Lie groups one can ask for more: the set of pairs of elements that generate a dense {\it free} subgroup is residual \cite[Corollary 6.87]{hm}.
\end{itemize}

In this same spirit, we consider problems of the following general shape: $G$ will be a connected Lie group (typically linear, etc.), and, taking a cue from the above-mentioned Auerbach theorem, we are interested in pairs of elements $g,h\in G$ that generate ``large'' subgroups thereof. The discussion then revolves around the general principle that the sets of such pairs $(g,h)\in G$ tend to be ``well-behaved''. To make the two quoted phrases more precise:
\begin{itemize}
\item `large' means that the group $\langle g,h\rangle\subset G$ is dense,
\item while `well-behaved' means `open'.
\end{itemize}
This is still ambiguous: each of the two bullet points tacitly assumes a topology, which in turn entails (at least) a binary choice: Zariski or standard (i.e. Euclidean; cf. \Cref{subse:alggps}). As per \Cref{qu:4choices} and the ensuing discussion, the more demanding results of this nature will require that tuples which generate {\it Euclidean}-dense subgroups be {\it Zariski}-open; for that reason, these are the types of statements we focus on throughout the paper.

Concretely, the main results are as follows. First, as far as generating element tuples go, there is

\begin{theorem}[\Cref{th:zarop}]
  Let $G$ be a connected, semisimple, Linear Lie group and $k\ge 2$ a positive integer.

  The space of $k$-tuples that generate a Euclidean-dense subgroup of $G$ is Zariski-open when $G$ is compact and Euclidean-open in general.  
\end{theorem}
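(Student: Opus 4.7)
The plan is to treat the compact and general cases with slightly different strategies, exploiting the principle that semisimplicity brings Euclidean-density close to an algebraic notion.

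For the \emph{compact case}, I would pass to the complexification: let $G_\bC$ denote the complex semisimple algebraic group of which $G$ is the compact real form. The key reduction is that for any $\Gamma \le G$, Euclidean-density of $\Gamma$ in $G$ is equivalent to Zariski-density of $\Gamma$ in $G_\bC$. The forward direction is immediate since $G$ itself is Zariski-dense in $G_\bC$; the reverse uses that the Euclidean closure of $\Gamma$ is a closed Lie subgroup of $G$ whose complexified Lie algebra must equal $\fg_\bC$, forcing full real dimension and hence coincidence with $G$ by connectedness. It then suffices to invoke the standard fact that Zariski-density of $\langle g_1, \ldots, g_k \rangle$ is a Zariski-open condition in $G_\bC^k$, which rests on there being finitely many conjugacy classes of maximal proper algebraic subgroups $M_1, \ldots, M_r$ of $G_\bC$ and on the Zariski-closedness of each conjugation orbit $\bigcup_g (g M_i g^{-1})^k \subset G_\bC^k$. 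Restricting to $G^k \subset G_\bC^k$ gives the desired Zariski-open subset.

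For the \emph{general case}, I would argue via a Chabauty continuity argument. Given a Euclidean-dense generating tuple $(g_i^{(0)})$ and a Euclidean-convergent sequence $(g_i^{(n)}) \to (g_i^{(0)})$ in $G^k$, set $H_n := \ol{\langle g_i^{(n)} \rangle}$. By compactness of the space of closed subgroups of $G$ in the Chabauty topology, I pass to a subsequence for which $H_n \to H_\infty$ in that topology. Since each $g_i^{(n)} \in H_n$ and $g_i^{(n)} \to g_i^{(0)}$, the limit $H_\infty$ contains all the $g_i^{(0)}$, whence $H_\infty \supseteq \ol{\langle g_i^{(0)} \rangle} = G$, so $H_\infty = G$. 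The conclusion then reduces to the structural fact $(\star)$: \emph{for a connected semisimple Lie group $G$, the Chabauty limit of any sequence of proper closed subgroups is itself a proper closed subgroup.} Granting $(\star)$, the relation $H_n \to G$ forces $H_n = G$ for all large $n$, so $\langle g_i^{(n)} \rangle$ is eventually Euclidean-dense and the set of Euclidean-dense generating tuples is Euclidean-open.

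The main obstacle is verifying $(\star)$. For compact semisimple $G$ I would expect it to follow from the rigidity of proper closed subgroups: the finitely many Dynkin-type conjugacy classes of maximal proper closed subgroups, together with dimension-gap arguments, should keep the Hausdorff distance of any proper closed subgroup from $G$ uniformly bounded away from zero. For non-compact semisimple $G$ one needs separate control of discrete Chabauty limits (via Kazhdan--Margulis-type uniform covolume bounds, which prevent lattices from ever becoming dense) and of continuous ones (via the algebraicity of closed connected Lie subgroups of semisimple Lie groups, which severely constrains the admissible Chabauty limits). A related subtlety is that dimension can genuinely jump upward in Chabauty limits --- circles of growing winding number in a torus tending to the full torus --- but semisimplicity is precisely what rules out such degenerations producing $G$ itself as the limit.
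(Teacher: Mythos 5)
There are genuine gaps in both halves of your proposal, and in each case the gap sits precisely where the paper's proof does its real work: in controlling the finite/discrete degeneration.

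In the compact case, the claim that $\bigcup_g (g M_i g^{-1})^k \subset G_\bC^k$ is Zariski-closed for a maximal algebraic subgroup $M_i$ is false in general. Take $G_\bC = SL_2(\bC)$ and $M = N(T)$, the normalizer of a maximal torus (a maximal proper algebraic subgroup). Every element of every conjugate of $N(T)$ is semisimple, yet a sequence of pairs $(a_n, 1)$ with $a_n$ semisimple and $a_n \to u$ unipotent lies in $\bigcup_g (g N(T) g^{-1})^2$ while its limit $(u,1)$ does not. So the conjugation-orbit union of a maximal \emph{reductive} subgroup is not closed; only the parabolic case is saved by properness of $G/P$. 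The paper sidesteps this by never trying to prove closedness of a single orbit union: it stratifies the failure of generation into the positive-dimensional case (handled via properness of the Grassmannian, \Cref{le:grsisprop} and \Cref{pr:genlie}), the ``all projections full but still proper'' case (\Cref{pr:surjevery}, needing \Cref{le:surjevery} and \Cref{le:diagalg}), and the finite-subgroup case (\Cref{pr:gendisc}\Cref{item:11}, via Jordan's theorem and the Zariski-open noncommutation condition \Cref{eq:wnc}). Note also that a proof of Zariski-openness that never uses compactness, as yours seemingly does not once you've passed to $G_\bC$, cannot be correct, since the paper's \Cref{re:latzardense} shows Zariski-openness genuinely fails for $SL_2(\bR)\times SL_2(\bR)$ (lattices are Zariski-dense).

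In the non-compact case, your proof reduces to the unproven structural claim $(\star)$, and you yourself flag it as the main obstacle. The issue is that $(\star)$ is essentially equivalent to the statement being proved: the winding-circle example you raise shows dimension does jump in Chabauty limits, and to rule out proper $H_n \to G$ one must separately bound the discrete part near the identity (this is precisely the Zassenhaus-neighborhood argument the paper uses in \Cref{pr:gendisc}\Cref{item:14}, quantified via an $\varepsilon$-net) and the continuous part (bounded Lie-algebra dimension). Wrapping those ingredients in Chabauty language is a pleasant reformulation, but the work is identical, and as submitted $(\star)$ is asserted, not proved.
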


The compactness constraint cannot be immediately removed because discrete subgroups will not allow it (\Cref{re:latzardense}), but with that obstacle out of the way we have Zariski-openness in general.

\begin{theorem}[\Cref{th:mostsubalg}]
  Let $G$ be a connected, semisimple, Linear Lie group and $k\ge 2$ a positive integer.

  The space of $k$-tuples of Lie subalgebras $\fg_i\le \fg:=Lie(G)$ whose underlying Lie subgroups of $G$ generate a Euclidean-dense subgroup is Zariski-dense in the $k$-fold Grassmannian power $\mathrm{Gr}(\fg)^k$.
\end{theorem}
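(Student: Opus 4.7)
The plan is to deduce the theorem from \Cref{th:zarop} through the following reduction: if a tuple $(\fg_1,\ldots,\fg_k)$ of Lie subalgebras \emph{Lie-generates} $\fg$---that is, the smallest Lie subalgebra of $\fg$ containing all the $\fg_i$ is $\fg$ itself---then the analytic subgroups $H_i\le G$ generate a Euclidean-dense subgroup of $G$. Indeed, the Euclidean closure $\tilde H:=\overline{\langle H_1,\ldots,H_k\rangle}$ is a closed Lie subgroup whose Lie algebra contains every $\fg_i$, hence the Lie subalgebra they generate; when that is $\fg$, $\tilde H$ is open in $G$, and connectedness of $G$ forces $\tilde H = G$.

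I would first check that Lie-generation is a Zariski-open condition on $(\fg_i)\in\mathrm{Gr}(\fg)^k$: the dimension of $\langle\fg_1,\ldots,\fg_k\rangle_{\mathrm{Lie}}$ equals the rank of an iterated-bracket linear map whose matrix entries are polynomial in the Pl\"ucker coordinates of the $\fg_i$ (the bracketing stabilizes by depth $\dim\fg$), hence lower-semicontinuous. The condition ``rank $=\dim\fg$'' then cuts out a Zariski-open subset $W\subseteq\mathrm{Gr}(\fg)^k$ contained in the desired set of subalgebra tuples.

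The main step is to show that $W$ is Zariski-dense, which I would verify stratum by stratum in the dimension stratification of $\mathrm{Gr}(\fg)^k$. On the stratum $\bP(\fg)^k$ of $1$-dimensional subspaces (automatically Lie subalgebras), \Cref{th:zarop} furnishes a Euclidean-dense set of $(X_1,\ldots,X_k)\in\fg^k$ with $(\exp X_i)$ topologically generating $G$; for such $(X_i)$ the lines $\bR X_i$ Lie-generate $\fg$, and the resulting image in $\bP(\fg)^k$ is Euclidean-, hence Zariski-dense. For a higher-dimensional stratum, given a tuple $(\fg_i)$ one picks generic $X_i\in\fg_i$: if the lines $\bR X_i$ already Lie-generate $\fg$, then so do the $\fg_i$.

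The principal obstacle is the remaining non-emptiness of $W$ in every relevant irreducible component of the higher-dimensional strata: components all of whose tuples lie inside a common proper subalgebra of $\fg$ (for instance tuples trapped inside a single Borel) admit no Lie-generating configurations and must be excluded. Ruling these out calls for a structural argument leveraging the semisimplicity of $\fg$---the absence of proper ideals, together with the fact that generic $G^k$-conjugates of any positive-dimensional tuple escape any fixed proper subalgebra---to guarantee that $W$ is non-empty, hence Zariski-open dense, on every component where generation is in principle possible.
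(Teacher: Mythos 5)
The theorem you quote is the paper's looser introductory paraphrase; what is actually proved as \Cref{th:mostsubalg} is Zariski-\emph{openness} of the set of generating tuples in $\mathrm{Gr}(\fg)^k$. The paper's proof is essentially a one-line reduction: since Lie subalgebras correspond to \emph{connected} Lie subgroups, the sole obstruction to Zariski-openness in the non-compact case of \Cref{th:zarop} --- discrete subgroups (\Cref{re:latzardense}) --- simply cannot occur, so by \Cref{re:zarifconn} the case analysis in the proof of \Cref{th:zarop} carries over unchanged. Density (where it holds) comes separately, from \Cref{th:strreggen}. Taken literally, ``Zariski-dense in $\mathrm{Gr}(\fg)^k$'' is in fact false: the component $(\{0\},\dots,\{0\})$ contains no generating tuple, and already for $d_i=2$ the Lie-subalgebra locus is a proper closed subvariety of $\mathrm{Gr}(2,\fg)$, so nothing supported on it can be Zariski-dense in the ambient component.

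Your route through Lie-generation is genuinely different from the paper's and is a reasonable way to \emph{exhibit} generating tuples, but there is a gap you do not flag, on top of the non-emptiness issue you do acknowledge: the set $W$ of Lie-generating tuples is a \emph{proper} subset of the set of topologically generating tuples, because the Euclidean closure of the analytic subgroup $\langle H_1,\dots,H_k\rangle$ may have Lie algebra strictly larger than $\langle\fg_1,\dots,\fg_k\rangle_{\mathrm{Lie}}$ (already for a single one-parameter subgroup $\exp(\bR X)$ whose closure is a torus). Consequently, proving $W$ is Zariski-open does not establish the Zariski-openness of the theorem's set, and density of $W$ on a given component is a stronger statement than the density you need there. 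For the openness that \Cref{th:mostsubalg} actually asserts, the efficient route is the paper's observation that the non-discreteness hypothesis in \Cref{pr:genlie} and \Cref{pr:surjevery} is automatic when the $\fg_i$ are non-zero Lie subalgebras, so those propositions apply directly in the Grassmannian setting.
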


Finally, it would be helpful if the above open sets were, in fact, non-empty. Then the Zariski-openness would mean that they are truly ``large'' (full Lebesgue measure, second Baire category, dense in any topology, etc.). In this direction, we have the following result, paraphrased from \Cref{th:strreggen} for brevity. 

\begin{theorem}
  Let $G$ be a connected, semisimple, Linear Lie group, $k\ge 2$ a positive integer, and $H_i\le G$, $i=1,2$ two appropriately generic circle subgroups.

  The space of pairs $(g_1,g_2)\in G^2$ for which the conjugates $Ad_{g_i}H_i$ generate a Euclidean-dense subgroup of $G$ is non-empty, and hence Zariski-open and Euclidean-dense.
\end{theorem}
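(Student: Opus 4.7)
The plan is to reduce the question to a purely Lie-algebraic condition and then exhibit one concrete pair meeting it. The Zariski-openness and Euclidean-density assertions will follow from the earlier theorems (\Cref{th:zarop,th:mostsubalg}) once non-emptiness is established, so I focus on producing a single pair $(g_1,g_2)\in G^2$ for which $\mathrm{Ad}_{g_1}H_1$ and $\mathrm{Ad}_{g_2}H_2$ topologically generate $G$.

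First I would invoke Cartan's closed subgroup theorem: since the $H_i$ are connected, the Euclidean closure of $\langle\mathrm{Ad}_{g_1}H_1,\mathrm{Ad}_{g_2}H_2\rangle$ is a connected closed Lie subgroup of the connected group $G$, equal to $G$ precisely when its Lie algebra --- which contains the Lie subalgebra generated by $\mathrm{Ad}_{g_1}\fh_1$ and $\mathrm{Ad}_{g_2}\fh_2$ (with $\fh_i:=\mathrm{Lie}(H_i)$) --- is all of $\fg$. Thus it suffices to find $g_1,g_2$ for which $\mathrm{Ad}_{g_1}\fh_1$ together with $\mathrm{Ad}_{g_2}\fh_2$ Lie-generates $\fg$.

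Second, I would set $g_1=e$ and write $\fh_i=\bR X_i$ with $X_i$ regular semisimple, which I take to be the content of ``appropriately generic''. Letting $\ft$ denote the unique Cartan subalgebra containing $X_1$, I would decompose $\fg_\bC=\ft_\bC\oplus\bigoplus_\alpha\fg_\alpha$ into $\mathrm{ad}(X_1)$-eigenspaces, and then choose $g_2\in G$ so that $Y:=\mathrm{Ad}_{g_2}X_2$ has non-zero component in every root space $\fg_\alpha$. The set of $Y\in\fg$ satisfying this is Zariski-open; for appropriately generic $X_2$ the adjoint orbit $\mathrm{Ad}(G)X_2$ is not contained in the finitely many hyperplanes cut out by vanishing in some $\fg_\alpha$, and concretely one may take $g_2=\exp(Z)$ with $Z=\sum_\alpha c_\alpha e_\alpha$, all $c_\alpha\neq 0$ and $\|Z\|$ small.

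The core verification is Lie-generation from $X_1$ and $Y$. Regularity of $X_1$ makes the $\mathrm{ad}(X_1)$-eigenvalues on distinct $\fg_\alpha$ distinct, so a Vandermonde argument applied to $\{\mathrm{ad}(X_1)^kY\}_k$ extracts each root-space component $Y_\alpha$ into the subalgebra generated by $X_1$ and $Y$; the nonvanishing $Y_\alpha$ together with further $\mathrm{ad}(X_1)$-action fills out all of $\fg_\alpha$; and brackets $[\fg_\alpha,\fg_{-\alpha}]$ then produce coroot directions spanning $\ft$ by semisimplicity. The principal obstacle I foresee is the non-compact case: if $G$ lacks a compact Cartan, $X_1$ cannot lie in a compact torus and the real root-space structure becomes more intricate (restricted roots with multiplicities). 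Framing the ``appropriately generic'' hypothesis strongly enough that the Vandermonde-plus-coroot argument goes through in that setting --- most likely via the restricted-root decomposition of the real form --- is where I expect the bulk of the technical work to lie.
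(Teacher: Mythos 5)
There is a genuine gap, and it sits precisely in the step you flag as the ``core verification.'' Your Vandermonde argument relies on the assertion that regularity of $X_1$ makes the $\mathrm{ad}(X_1)$-eigenvalues on distinct root spaces $\fg_\alpha$ distinct. But $H_1$ is a \emph{circle}, so $X_1$ lies in the Lie algebra $\ft$ of a maximal torus $T$ of a maximal compact subgroup $K\subset G$ --- \emph{not} in a full Cartan subalgebra of $\fg$ when $\fg$ is non-compact. The paper's \Cref{le:tormod} shows that the resulting $T$-isotypic decomposition of $\fg$ has each nontrivial $2$-dimensional real irreducible appearing with multiplicity \emph{two} whenever $\fg$ is complex viewed as real, and again with multiplicity two for the size-$2$ Vogan orbits when $\fg$ is non-complex with a nontrivial diagram involution. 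In those cases two distinct root spaces $\fg_\alpha,\fg_\beta$ of $\fg_\bC$ carry the \emph{same} $\mathrm{ad}(X_1)$-eigenvalue no matter how one chooses $X_1\in\ft$; strong regularity (\Cref{def:reg}) only guarantees that distinct $T$-weights stay distinct on $H_1$, it cannot collapse the multiplicities. The Vandermonde then extracts from $\{\mathrm{ad}(X_1)^k Y\}_k$ only a single line in each $2$-dimensional eigenspace of $\mathrm{ad}(X_1)$ on $\fg_\bC$, not the full root space, and your argument stops short of $\fg$.

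The paper's proof of \Cref{th:strreggen} handles exactly this: the trivial-involution case (including compact $G$) goes through essentially as you describe, since there multiplicity is $1$; but the complex case and the nontrivial-involution case need an extra bracketing step. One shows that a generic conjugate of $\fh_2$ contributes a line of the form $ae_\alpha+be_{-\alpha}$ with $ab$ chosen so the resulting bracket $[ae_\alpha+be_{-\alpha},\,aie_\alpha-bie_{-\alpha}]=-2abi\,h_\alpha$ fails to preserve the $2$-dimensional span, forcing the generated subalgebra to pick up both mirror root spaces rather than a line. You do sense that something breaks outside the compact case, but you misdiagnose it as a restricted-root (Iwasawa) phenomenon when $G$ lacks a compact Cartan; the actual obstruction is the multiplicity-$2$ structure of the maximal-compact-torus weight decomposition, which is present even for $\fg=\mathfrak{sl}_n(\bC)|_{\bR}$, and the fix is this additional bracket computation rather than a refinement of the genericity hypothesis. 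A minor further imprecision: your candidate $g_2=\exp(Z)$ with $Z=\sum_\alpha c_\alpha e_\alpha$ uses complex root vectors and need not land in $\fg$; one must take appropriate real combinations.
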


`Sufficiently generic' will be unwound below: it means `strongly regular', per \Cref{def:reg}.

\subsection*{Acknowledgements}

I am grateful for I. Penkov's advice and pointers to the literature and A. Sikora's helpful comments.

This work is partially supported through NSF grant DMS-2001128.

\section{Preliminaries}\label{se.prel}

Lie algebras are assumed finite-dimensional unless specified otherwise. Following standard practice (e.g. \cite[\S 2.1]{hum}, \cite[\S 9.1]{fh}, \cite[Chapter VI, Definition 2.3]{ser-lie} and so on), {\it simple} Lie algebras are always assumed non-abelian so as to obviate the need to handle the 1-dimensional Lie algebra separately.

\begin{notation}
  The operator $Lie(-)$ does double duty, depending on context: it means either the Lie algebra of a Lie group, or, when applied to a Lie subalgebra $\fl\le \fg$ of a Lie group $G$ (as in $Lie(\fl)$), it means the connected Lie subgroup of $G$ with Lie algebra $\fl$.
\end{notation}

\subsection{Semisimple Lie groups and algebras}\label{subse:ssliealg}

In addition to standard references on complex semisimple Lie algebras (e.g. \cite{hum}, \cite[Part IV]{fh} and so on), we will need some material on {\it real} forms thereof. Good references for this are \cite[Chapters VI and IX]{helg} and \cite[Chapters VI and VII]{knp}. The latter, in particular, gives a treatment that will be convenient below. What follows is by no means meant as an exhaustive introduction, but rather as a very brief review of the terminology and setup.

\begin{definition}\label{def:cartinv}
  For a real semisimple Lie algebra $\fg$ a {\it Cartan involution} is an order-$(\le 2)$ automorphism $\theta:\fg\to \fg$ for which the bilinear form
  \begin{equation*}
    B_{\theta}(x,y):=-\text{(Killing form)}(x,\theta y)
  \end{equation*}
  is positive-definite.

  Any two Cartan involutions are conjugate under the adjoint group $\mathrm{Int}(\fg)$. 
\end{definition}
(See \cite[\S VI.2]{knp}). 

The $1$ and $(-1)$-eigenspaces $\fk$ and $\fp$ of a Cartan involution play an important role in the classification of real {\it simple} Lie algebras \cite[\S VI.10]{knp} (arbitrary {\it semi}simple ones being direct sums of the latter \cite[Theorem 1.54]{knp}). Following common convention (e.g. \cite[\S\S VI.2 and VI.3]{knp}), we refer to $(\fk,\fp)$ as a {\it Cartan decomposition} of $\fg$. In summary:

\begin{itemize}
\item The $1$-eigenspace $\ker(\theta-\id)$ is a Lie subalgebra $\fk\le \fg$, whose corresponding Lie subgroup in $\mathrm{Int}(\fg)$ (or in any linear Lie group with Lie algebra $\fg$) is maximal compact in $\fg$ \cite[Theorem 6.31]{knp}.
\item $\fg$ has $\sigma$-invariant {\it Cartan subalgebras} $\fh\subset \fg$ (i.e. Lie subalgebras whose complexification is Cartan in $\fg_{\bC}:=\fg\otimes_{\bR}\bC$ in the sense of complex Lie algebras \cite[\S 15]{hum}): \cite[Proposition 6.47]{knp}. Being $\sigma$-invariant, these will decompose as direct sums
  \begin{equation*}
    \fh=(\fh\cap \fk)\oplus(\fh\cap\fp),
  \end{equation*}
  where $\fk$ and $\fp$ are the $(\pm 1)$-eigenspaces of $\theta$ respectively. 
\item Among the $\theta$-invariant Cartan subalgebras, of special interest are the {\it maximally compact ones}, for which $\dim(\fh\cap\fk)$ is as large as possible, and the {\it maximally non-compact} ones, for which $\dim(\fh\cap\fp)$ is maximal instead (\cite[discussion preceding Proposition 6.60]{knp}).
\item While Cartan subalgebras are not all conjugate under $\mathrm{Int}(\fg)$,
  \begin{enumerate}[(a)]
  \item they all have the same dimension (because all Cartan subalgebras of the complexification $\fg^{\bC}$ do \cite[Theorem 2.15]{knp});
  \item they fall into finitely many $\mathrm{Int}(\fg)$-conjugacy classes \cite[Proposition 6.64]{knp};
  \item the ($\sigma$-invariant) maximally compact ones are mutually conjugate under the Lie group $K\subset \mathrm{Int}(\fg)$ with Lie algebra $\fk$, as are the maximally non-compact ones \cite[Proposition 6.61]{knp}.
  \end{enumerate}  
\end{itemize}

\begin{remark}\label{re:charind}
  It follows from the same \cite[Theorem 6.31]{knp} (or from the {\it Iwasawa decomposition} of $G$; \cite[Theorem 6.46]{knp}, \cite[\S 4.13, Theorem]{mz} or the original \cite[Theorem 13]{iw}) that a semisimple, connected Lie group with finite center is homeomorphic to $K\times \bR^r$ for a maximal compact subgroup $K$. The number $r$ is the {\it characteristic index} $\mathrm{c}(H)$ of $H$.
\end{remark}

\begin{notation}\label{not:quad}
  For brevity we will, on occasion, package the data of a Cartan involution $\theta$ on $\fg$ with corresponding decomposition $\fg=\fk\oplus \fp$ as a quadruple $(\fg,\theta,\fk,\fp)$. 
\end{notation}

Now consider a Cartan decomposition $(\fg,\theta,\fk,\fp)$ of a real semisimple Lie algebra $\fg$ (lettering as above), and let
\begin{equation*}
  \fh=(\fh\cap \fk)\oplus(\fh\cap\fp) =: \ft\oplus \fa
\end{equation*}
be a maximally compact $\theta$-invariant Cartan subalgebra. This then also provides the system of roots $\Delta$ associated to the {\it complex} Cartan subalgebra $\fh^{\bC}\subset \fg^{\bC}$. As explained in the discussion opening \cite[\S VI.8]{knp}, it is possible to choose a system $\Delta^+$ of positive roots invariant (as a set) under the involution $\theta$: it is enough, for instance, to
\begin{itemize}
\item define an order on
\begin{equation}\label{eq:distbasis}
  (\text{a basis for }i\ft)\cup(\text{a basis for }\fa)
\end{equation}
that places the former elements before the latter;
\item and then define a root to be positive if it takes a positive value on the smallest element of \Cref{eq:distbasis} where it is non-zero.
\end{itemize}
Having fixed this setup (always assumed whenever we refer to positivity, simple roots, etc. in the context of Cartan subalgebras), one approach to the classification of simple Lie algebras is via the {\it Vogan diagram} attached to this data \cite[\S VI.8]{knp}:
\begin{itemize}
\item the Dynkin diagram \cite[\S II.5]{knp} of the complexification $\fg^{\bC}$;
\item with arrows joining the pairs of dots / simple roots that are equal when restricted to the compact piece $\ft$ of $\fh$ (i.e. those pairs of simple roots, if any, that are interchanged by the involution $\theta$);
\item and those dots corresponding to simple roots $\alpha$ that are fixed by $\theta$ (i.e. {\it imaginary} $\alpha$) and whose eigenspace is contained in the complexification $\fp^{\bC}$ (i.e. {\it non-compact} $\alpha$) painted black.
\end{itemize}

Simple real Lie algebras fall into two qualitative classes \cite[Theorem 6.94]{knp}:
\begin{enumerate}[(a)]
\item\label{item:23} complex simple Lie algebras regarded as real algebras by scalar restriction;
\item\label{item:22} non-complex ones. 
\end{enumerate}
The Vogan diagram is connected (in the sense that the underlying Dynkin diagram is) precisely for simple Lie algebras of type \Cref{item:22}: \cite[discussion preceding Theorem 6.96]{knp}. Since {\it complex} simple Lie algebras are more readily classified than real ones, it is usually case \Cref{item:22} that one usually focuses on in this setting. In that context (connected Vogan diagrams) it can be shown \cite[Theorem 6.96]{knp} that at most {\it one} Dynkin-diagram dot need be painted.

In turn, the non-complex simple Lie algebras fall into two distinct classes, typically treated separately:
\begin{enumerate}[(1)]
\item\label{item:24} those whose underlying Vogan-diagram automorphism is trivial, which is equivalent to saying that a maximally compact Cartan subalgebra $\fh\subset \fg$ consists entirely of its compact piece $\ft\subset \fk$;
\item\label{item:25} those with {\it non}-trivial automorphism group.
\end{enumerate}
In both cases, the rank of the compact group $K\subset\mathrm{Int}(\fg)$ with Lie algebra $\fk=\ker(\theta-\id)$ (i.e. the dimension of a maximal torus therein) equals the number of orbits of $\theta$-orbits displayed as part of the Vogan-diagram structure.

To illustrate, an instance of \Cref{item:24}:

\begin{example}
  As described in \cite[\S VI.8, Example 1]{knp}, the type-A$_n$ Dynkin diagram with trivial automorphism and the $p^{th}$ vertex painted corresponds to the Lie algebra $\mathfrak{su}(p,q)$, whose corresponding maximal compact group is the full-rank $S(U(p)\times U(n-p))\subset SL(n,\bC)$, where the $S(-)$ on the left-hand side indicates determinant-1 elements.
\end{example}

As for \Cref{item:25}, we have 

\begin{example}
  As per \cite[\S VI.10]{knp}, the Dynkin diagram A$_{2n}$ equipped with its only non-trivial involution corresponds to the Lie algebra $\mathfrak{sl}(2n+1,\bR)$. The rank of the corresponding maximal compact subgroup $SO(2n+1)$ is indeed $n$, i.e. the number of orbits of the involution.
\end{example}

We will need some information on the structure of a simple real Lie algebra as a module over a maximal torus of the maximal compact group $K$ associated to a Cartan decomposition. Recall the quadruple-style \Cref{not:quad} for Cartan decompositions.

\begin{lemma}\label{le:tormod}
  Let
  \begin{itemize}
  \item $(\fg,\theta,\fk,\fp)$ be a Cartan decomposition of a real simple Lie algebra $\fg$;
  \item $K\subset G:=\mathrm{Int}(\fg)$ the maximal compact subgroup with Lie algebra $\fk$;
  \item $\fh\subset \fg$  a maximally compact Cartan subalgebra, with decomposition
    \begin{equation*}
      \fh=\ft\oplus \fa,\quad \ft\subset \fk,\ \fa\subset \fp,
    \end{equation*}
  \item and $T\subset K$ the maximal torus with Lie algebra $\ft$.
  \end{itemize}
  The adjoint representation of $T$ on $\fg$ decomposes as a direct sum
  \begin{equation*}
    \fg=\fh\oplus V,
  \end{equation*}
  where $\fh$ carries the trivial $T$-action and $V$ has no trivial subrepresentations. Furthermore,
  \begin{enumerate}[(a)]
  \item\label{item:26} If $\fg$ is complex then $V$ is a sum of 2-dimensional irreducible $T$-representations, each appearing with multiplicity 2.
  \item\label{item:27} If $\fg$ is non-complex and its Vogan diagram has trivial automorphism group then $V$ is a sum of 2-dimensional irreducible $T$-representations, each appearing with multiplicity 1.
  \end{enumerate}
\end{lemma}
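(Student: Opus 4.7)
The plan is to pass to a complex root-space decomposition and read off the real $T$-representation structure from it; the fixed part $\fh$ will come from the $\ft$-centralizer, while $V$ will be built out of the complex root spaces. The two cases require slightly different setups. In case \Cref{item:26}, $\fg$ is already complex and $\fh$ itself serves as a complex Cartan subalgebra, giving the root decomposition
\begin{equation*}
  \fg = \fh\oplus\bigoplus_{\alpha\in\Delta}\fg_\alpha
\end{equation*}
with each $\fg_\alpha$ complex one-dimensional. In case \Cref{item:27} one instead passes to $\fg_\bC := \fg\otimes_\bR\bC$ with complex Cartan $\fh_\bC := \fh\otimes_\bR\bC$ and the analogous root decomposition in $\fg_\bC$. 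In either case $\ft$ is a real form of the relevant complex Cartan, so every root $\alpha$ restricts non-trivially to $\ft$; hence the $\ft$-centralizer in the complex Lie algebra is exactly that Cartan, and intersecting with $\fg$ yields $\fg^T = \fh$.

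Next, each complex root space is one-dimensional over $\bC$, and $T=\exp(\ft)$ acts on it by the character $t\mapsto\exp(\alpha(t))$, valued in the unit circle because $\alpha(\ft)\subset i\bR$. The underlying real 2-dim $T$-representation is therefore the irreducible rotation by $\beta_\alpha := -i\alpha|_{\ft}\in\ft^{*}$, and $\alpha,-\alpha$ determine the same real isomorphism class (rotations by $\pm\beta_\alpha$ are conjugate in $O(2)$). Because roots are determined by their restrictions to $\ft$, distinct unordered pairs $\{\alpha,-\alpha\}$ give distinct real irreducibles. This reduces the lemma to a multiplicity count.

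In case \Cref{item:26} the root decomposition of $\fg$ is directly a decomposition over $\bR$, and the real 2-dim irreducible attached to $\{\alpha,-\alpha\}$ appears once in $\fg_\alpha$ and once in $\fg_{-\alpha}$, giving multiplicity $2$. In case \Cref{item:27} the triviality of the Vogan automorphism forces $\fa=0$, and hence $\theta|_{\fh_\bC}=\id$; a short computation using the imaginarity of $\alpha|_{\ft}$ then shows that the complex conjugation $c:\fg_\bC\to\fg_\bC$ whose fixed-point set is $\fg$ exchanges the root spaces for $\alpha$ and $-\alpha$, so $\fg$ meets each paired sum of root spaces in a single real 2-dim subspace carrying the $T$-irreducible for $\{\alpha,-\alpha\}$, giving multiplicity $1$. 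The main difficulty---essentially bookkeeping rather than conceptual---is keeping the intrinsic complex structure of $\fg$ in case \Cref{item:26} separate from the external scalar extension used in case \Cref{item:27}, and carrying out the conjugation computation carefully.
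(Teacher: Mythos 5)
Your proposal is correct and follows essentially the same route as the paper: in case (a) it reads the multiplicity-$2$ count directly off the root-space decomposition of the complex Lie algebra viewed over $\bR$, pairing $\fg_\alpha$ with $\fg_{-\alpha}$, and in case (b) it passes to the complexification and uses the triviality of the Vogan involution (so $\fa=0$) to pair root spaces under complex conjugation. The only difference is presentational: you make the conjugation argument in case (b) explicit, where the paper states the multiplicity-$1$ conclusion more tersely as ``avoiding duplication'' of $(\fg^{\bC})_{\pm\alpha}$.
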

\begin{proof}
  That we do indeed have a decomposition $\fg=\fh\oplus V$ with $V$ a sum of 2-dimensional irreducible real $T$-representations follows from
  \begin{itemize}
  \item the compactness of $T$, which means that it acts completely reducibly;
  \item the fact that $\fh$ is precisely the centralizer of $\ft$ in $\fg$, this being the general procedure of obtaining a maximally compact Cartan subalgebra of $\fg$ \cite[Proposition 6.60]{knp};
  \item the fact that the only non-trivial real irreducible representations of a torus are 2-dimensional.
  \end{itemize}
  It remains to justify the multiplicity claims. In the complex case \Cref{item:26} simply note that we have the usual root-space decomposition
  \begin{equation*}
    \fg = \fh\oplus\bigoplus_{\alpha\in\Delta}\fg_{\alpha}
  \end{equation*}
  (\cite[top of p.128]{knp}) for the root system $\Delta:=\Delta(\fg,\fh)$ associated to the Cartan subalgebra $\fh\subset \fg$, and as {\it real} $T$-representations $\fg_{\alpha}$ is isomorphic to $\fg^{\beta}$ precisely for $\alpha=\pm\beta$.

  On the other hand, in the trivial-automorphism-Vogan-diagram case \Cref{item:27} we have an analogous decomposition of the complexification $\fg^{\bC}$:
  \begin{equation*}
    \fg^{\bC} = \ft^{\bC}\oplus\bigoplus_{\alpha\in\Delta} \left(\fg^{\bC}\right)_{\alpha}
  \end{equation*}
  for $\Delta:=\Delta(\fg^{\bC},\ft^{\bC})$. Since that decomposition is obtained as the complexification of the real $T$-representation $\fg$, the latter must contain, along with $\ft$, a copy of each 2-dimensional real irreducible $T$-representation $\left(\fg^{\bC}\right)_{\alpha}$ for {\it positive} $\alpha$, so as to avoid the duplication resulting from $\left(\fg^{\bC}\right)_{\alpha} \cong \left(\fg^{\bC}\right)_{-\alpha}$ (as real $T$-representations).
\end{proof}

\subsection{Algebraic structure}\label{subse:alggps}

We need some background on algebraic groups and group-scheme language, as covered, say, in \cite[Chapters AG, I]{bor}. Another good source is \cite[Preliminaries, sections 1 and 2]{ragh}, giving a brief but essentially complete summary of what we need (and more).

On occasion, we will have to regard Lie groups as real algebraic varieties. For every {\it compact} Lie group $G$ there is a complex algebraic group scheme ${\bf G}$ defined over $\bR$ such that
\begin{equation}\label{eq:gtoalg}
  G\cong {\bf G}({\bR}),
\end{equation}
i.e. $G$ consists of the {\it real points} of the scheme ${\bf G}$ \cite[\S 13]{bor}. This convention of denoting Lie groups by plain capital letters (like `$G$') and group schemes by bold-face letters (like `${\bf G}$') will be in place throughout.

The compact/algebraic link discussed above is a mainstay of representation theory and forms the basis of {\it Tannaka reconstruction}, for which many good presentations exist in the literature. These include \cite[Chapter VI, \S VIII and \S IX ]{chv}, \cite[Chapter 9]{rob}, or the very pithy and readable account in \cite{cas-alg} (which conveniently lists yet more sources).

Something of this sort goes through for more general Lie groups, but with some qualifications. First, when considering algebraic structures it will be convenient to specialize to connected Lie groups. Secondly, {\it linear} Lie groups (i.e. those realizable as closed subgroups of $GL_n(\bR)$ for some $n$) are of particular interest. In this context, we recall from \cite[Preliminaries, \S 2]{ragh} that
\begin{itemize}
\item for every connected, linear Lie group $G$ realized as
  \begin{equation}\label{eq:gingl}
    G\le GL_n(\bR)
  \end{equation}
  we can recover $G$ as the connected component ${\bf G}(\bR)_0$, where ${\bf G}$ is the group scheme obtained as the {\it Zariski closure} of \Cref{eq:gingl};
\item and the topology on $G$ obtained by restricting the Zariski topology of $GL_n(\bR)$ does not depend on the realization \Cref{eq:gingl}.
\end{itemize}
For this reason, there is a canonical Zariski topology on such a Lie group $G$, weaker than what we refer to as the {\it usual} (or {\it standard}, or {\it Euclidean}) topology. 

\section{Open sets of generating elements}\label{se:open}

Fix a Lie group $G$. In investigating the nature of the set of generating pairs $(g,h)\in G^2$ (or pairs of generating subgroups) two issues arise:
\begin{itemize}
\item the openness of those sets under various topologies (Zariski, Euclidean);
\item the non-emptiness of those sets.
\end{itemize}
We focus on the first item here, and defer the problem of non-emptiness until the next section. One reason is that openness questions can be addressed independently of whether or not the sets of interest are empty; another is that later, when proving such sets are, in fact, non-empty, the Zariski openness obtained previously will show that they are in fact ``large'': dense in the Euclidean topology, of full Lebesgue measure, etc.

\subsection{Generation by elements}\label{subse:elts}

\begin{convention}\label{cv:ez}
  Unqualified topological notions (closure, etc.) are to be interpreted as Euclidean, and whenever the Zariski topology is intended the text will make it clear.

  For emphasis though, it will also be convenient, on occasion to decorate terms with either an `e' (for Euclidean) or a `z' (for Zariski):
  \begin{itemize}
  \item `${}_z$closed' means `closed in the Zariski topology';
  \item `${}_e$open' means `Euclidean-open';
  \end{itemize}
  and so forth.
\end{convention}

It will be convenient to introduce the following objects.

\begin{notation}
  For a Lie group $G$ and a positive integer $k\ge 2$ we write
  \begin{itemize}
  \item $G^k_{\cat{prop}}$ for the set of tuples $(g_i)\in G^k$ which topologically generate a closed {\it proper} Lie subgroup $\overline{\langle g_i\rangle}$ of $G$;
  \item $G^k_{\cat{prop}>0}$ for the subset of $(g_i)\in G^k_{\cat{prop}}$ for which $\overline{\langle g_i\rangle}$ is non-discrete;
  \item similarly, $G^k_{\cat{prop}=0}$ for the subset of $(g_i)\in G^k_{\cat{prop}}$ for which $\overline{\langle g_i\rangle}$ {\it is} discrete.
  \end{itemize}
  The plain symbols presuppose the standard topology, but we might decorate them with left-hand subscripts as in \Cref{cv:ez} to indicate one of the topologies of interest: ${}_eG^k_{\cat{prop}}$ is just $G^k_{\cat{prop}}$, but there is a Zariski version ${}_zG^k_{\cat{prop}}$.
\end{notation}

One can now phrase a number of questions (four, depending on how which option is selected at each of the two junctions):

\begin{question}\label{qu:4choices}
  Is $
  \begin{cases}
    {}_eG^k_{\cat{prop}}\\
    {}_zG^k_{\cat{prop}}
  \end{cases}
  $
  closed in the $
  \begin{cases}
    \text{Zariski}\\
    \text{Euclidean}
  \end{cases}
  $ topology?
\end{question}

Schematically:
\begin{equation*}
\begin{tikzpicture}[auto,baseline=(current  bounding  box.center)]
\path[anchor=base] 
(0,0) node (l) {${}_eG^k_{\cat{prop}}\ {}_e\text{closed}$}
+(3,1) node (u) {${}_eG^k_{\cat{prop}}\ {}_z\text{closed}$}
+(3,-1) node (d) {${}_zG^k_{\cat{prop}}\ {}_z\text{closed}$}
+(6,0) node (r) {${}_zG^k_{\cat{prop}}\ {}_e\text{closed}$}
;
\draw[-] (l) to[bend left=6] node[pos=.5,auto] {$\scriptstyle $} (u);
\draw[-] (u) to[bend left=6] node[pos=.5,auto] {$\scriptstyle $} (r);
\draw[-] (l) to[bend right=6] node[pos=.5,auto,swap] {$\scriptstyle $} (d);
\draw[-] (d) to[bend right=6] node[pos=.5,auto,swap] {$\scriptstyle $} (r);
\end{tikzpicture}
\end{equation*}

The placement is meant to indicate the relative strength of the topologies. For instance, being Euclidean-dense is {\it harder}, as is being Zariski-closed; this justifies the topmost position for the node labeled `${}_eG^k_{\cat{prop}}\ {}_z\text{closed}$'. Revisiting Auerbach's generation theorem (\cite[Th\`eor\'eme II]{aub3}, \cite[Theorem 6.82]{hm}), one of the main results in this subsection is

\begin{theorem}\label{th:zarop}
  Let $G$ be a connected, semisimple, linear Lie group and $k\ge 2$ a positive integer. Then, the space of tuples $(g_i)_{i=1}^k\in G^k$ that generate $G$ topologically is
  \begin{enumerate}[(a)]
  \item\label{item:15} Zariski-open if $G$ is compact;
  \item\label{item:16} Euclidean-open in general. 
  \end{enumerate}
\end{theorem}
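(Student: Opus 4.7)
The two halves use different mechanisms, and I would handle them in turn.

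For~\cref{item:16} (Euclidean-openness, general $G$), fix $(g_i)\in G^k$ with $\overline{\langle g_i\rangle}=G$. By definition, $\fg=\mathrm{Lie}(\overline{\langle g_i\rangle})$ is the $\bR$-span of logarithms of elements of $\overline{\langle g_i\rangle}$ lying in any fixed normal $\log$-neighborhood $U\ni I$; density of $\langle g_i\rangle$ then lets me realize such elements as \emph{words} $w(g_\cdot)$ in the $g_i$'s, and hence pick finitely many $w_1,\dots,w_N$ in the free group $F_k$ with $w_j(g_\cdot)\in U$ and $\{\log w_j(g_\cdot)\}_{j=1}^N$ spanning $\fg$. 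Since the word maps $G^k\to G$ are real-analytic and ``spans $\fg$'' is a Euclidean-open condition on $\fg^N$, any $(g_i')$ in a suitable Euclidean neighborhood of $(g_i)$ still has $w_j(g_\cdot')\in U$ with $\{\log w_j(g_\cdot')\}$ spanning. As $w_j(g_\cdot')\in\overline{\langle g_i'\rangle}$, this forces $\mathrm{Lie}(\overline{\langle g_i'\rangle})=\fg$, whence $\overline{\langle g_i'\rangle}=G$ by connectedness of $G$.

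For~\cref{item:15} (Zariski-openness, compact $G$), the essential additional input from \Cref{subse:alggps} is that closed subgroups of a compact Lie group are exactly its real-algebraic subgroups, so $\overline{\langle g_i\rangle}$ is always algebraic. The plan is to stratify
\[
G^k_{\cat{prop}}\;=\;\bigcup_{j=1}^{r}\ \bigl\{(g_i)\in G^k:\exists\,x\in G,\ g_i\in xM_jx^{-1}\ \forall\,i\bigr\},
\]
with $M_1,\dots,M_r$ representing the conjugacy classes of maximal proper closed subgroups of $G$. Finiteness of $r$ combines Dynkin / Borel--de~Siebenthal (connected maximal subgroups), the single class $N(T)$ of torus normalizers, and a Jordan-type bound on maximal finite subgroups. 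Each stratum is then Zariski-closed: for connected $M_j$ as the Euclidean-proper (hence algebraically well-behaved) image of the conjugation map $G\times M_j^k\to G^k$; for $M_j=N(T)$ as the algebraic locus where the $\mathrm{Ad}(g_i)$ jointly preserve some common Cartan subalgebra; and for finite $M_j$ as a finite union of simultaneous-conjugation orbits, cut out by Procesi's trace invariants of tuples of matrices.

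The chief obstacle lies in~\cref{item:15}, specifically the non-simple case: a tuple can fail to generate densely for an ``ideal-factor'' reason --- densely generating some proper subproduct of simple factors while projecting to finite subgroups of the remaining factors --- in which case $\mathrm{Lie}(\overline{\langle g_i\rangle})$ is a proper ideal of $\fg$, normalized by \emph{all} of $G$. The naive ``normalizer of a proper subalgebra'' strategy misses these, but they still fall into the maximal-subgroup stratification above (they are contained in proper closed subgroups of the form ``subproduct of simple factors times finite complement''); handling them cleanly amounts to projecting to each simple factor and reducing to simple $G$ inductively.
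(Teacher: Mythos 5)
Your argument for part (b) has a genuine gap at the last step. You pick words $w_j$ with $w_j(g_\cdot)\in U$ and $\{\log w_j(g_\cdot)\}$ spanning $\fg$, then infer for nearby $(g_i')$ that $\mathrm{Lie}(\overline{\langle g_i'\rangle})=\fg$. That inference is not valid: a closed subgroup $H'$ can have elements of $H'\setminus H'_0$ arbitrarily close to the identity, and for such elements $\log h'\notin\mathrm{Lie}(H')$. So ``$\log(H'\cap U)$ spans $\fg$'' does not force $\mathrm{Lie}(H')=\fg$ unless $U$ is taken small enough \emph{relative to $H'$} --- and $H'$ is exactly what varies as you perturb $(g_i)$. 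This is precisely the issue the paper confronts with the Zassenhaus lemma in \Cref{pr:gendisc}\Cref{item:14}: one fixes a Zassenhaus neighborhood, bounds the dimension of nilpotent subalgebras below $\dim\fg$, and argues via an $\varepsilon$-net that a nearby tuple cannot generate a discrete subgroup. You would need something like this to close the gap; as written, the step from ``spanning logs'' to ``Lie algebra is $\fg$'' is unjustified.

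For part (a) you take a genuinely different route from the paper. The paper never invokes a classification of maximal subgroups: instead it stratifies according to the dimension of the Lie algebra $\fl=\mathrm{Lie}(\overline{\langle g_i\rangle})$ projected to each simple factor --- proper positive-dimensional (\Cref{pr:genlie}, via properness of the Grassmannian projection, \Cref{le:grsisprop}), full in each factor but proper globally (\Cref{pr:surjevery}, via the ``thin'' subalgebra analysis of \Cref{le:surjevery} and \Cref{le:diagalg}), and zero-dimensional (\Cref{pr:gendisc}\Cref{item:11}, via Jordan's theorem expressed as a Zariski-open commutator condition). Your maximal-subgroup stratification requires two nontrivial inputs you do not justify: that a compact connected semisimple group has only finitely many conjugacy classes of maximal proper closed subgroups (plausible --- Jordan bounds the order of maximal finite subgroups --- but it needs an argument, including finiteness of conjugacy classes of finite subgroups of bounded order), and that the finite-subgroup stratum is Zariski-closed; ``cut out by Procesi's trace invariants'' is not a proof, and the relevant properness argument needs to be set up. The paper's approach buys you cleaner, self-contained lemmas; yours would buy a uniform statement if the classification inputs were supplied.

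Finally, you misidentify the ``chief obstacle'' in the semisimple case. The ideal-factor/finite-projection scenario you single out is exactly Case~\Cref{item:13} of the paper's proof and is handled by projecting to simple factors and invoking Jordan. What you do \emph{not} account for --- and what cannot be dispatched by projecting to simple factors --- is a proper Lie subalgebra $\fl\subsetneq\fg$ that nonetheless surjects onto \emph{every} simple factor (a graph-of-isomorphism, or ``thin'', subalgebra). This is the content of \Cref{le:surjevery} and \Cref{pr:surjevery}, and it is the reason the paper cannot simply reduce to simple $G$; your outline would leave this case uncovered.
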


We will need the following simple auxiliary observation; there is certainly nothing new about it, but it is set out for future reference.

\begin{lemma}\label{le:grsisprop}
  Let $Z$ and $W$ be two $\bR$-schemes, with $Z$ projective. Then, the projection map
  \begin{equation*}
    Z(\bR)\times W(\bR)\to W(\bR)
  \end{equation*}
  is Zariski-closed. 
\end{lemma}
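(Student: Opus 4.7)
The plan is to reduce the statement to the classical elimination-theoretic fact that, for a projective $\bR$-scheme $Z$ and any $\bR$-scheme $W$, the scheme-theoretic projection $Z\times_{\bR} W\to W$ is a closed morphism of schemes (the content of properness of projective schemes over any base). Working locally on $W$: closedness of a map is local on the target, so covering $W$ by affine opens I may assume $W=\mathrm{Spec}(A)$ is affine. A Zariski-closed subset $C\subset Z(\bR)\times W(\bR)$ is by definition the common real zero locus of a family of $\bR$-polynomial functions on $Z\times W$, and these functions generate a coherent ideal sheaf defining a closed subscheme $\tilde C\subset Z\times W$ with $\tilde C(\bR)=C$.

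Next I apply the elimination-theoretic closedness to $\tilde C$, obtaining a closed subscheme $p(\tilde C)\subset W$; its real points $p(\tilde C)(\bR)$ then form a Zariski-closed subset of $W(\bR)$ that automatically contains $p(C)$. The remaining step is to identify $p(C)$ with $p(\tilde C)(\bR)$, i.e.\ to show every $\bR$-point of the scheme-theoretic image lifts to an $\bR$-point of $\tilde C$. The projectivity assumption is the key input here: it ensures $Z(\bR)$ is compact in the Euclidean topology (being closed in the $\bR$-points of some $\mathbb P^N$), so that the projection $Z(\bR)\times W(\bR)\to W(\bR)$ is Euclidean-proper, hence Euclidean-closed. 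In particular $p(C)$ is automatically Euclidean-closed in $W(\bR)$.

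The main obstacle will be precisely this last real-point identification, since in real algebraic geometry the image of a morphism on real points need not coincide with the real points of the scheme-theoretic image (already quadratic bundles over an affine base can have Euclidean-closed but non-Zariski-closed real images). The way I would aim to close this gap is to combine the Euclidean-closedness of $p(C)$ with the Zariski-containment $p(C)\subseteq p(\tilde C)(\bR)$: if the containment were strict in a way preventing Zariski-closedness, one could pick a strictly smaller Zariski-closed set containing $p(C)$ and use compactness of the fibers $\tilde C_w$ to propagate closedness between the two topologies, obtaining a contradiction on a suitable irreducible component of $p(\tilde C)$. This compactness-plus-elimination package is what I expect to drive the proof.
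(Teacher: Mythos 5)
Your diagnosis of the obstacle is correct, and it is fatal: the statement is false as written, by exactly the phenomenon you raise in your last paragraph. Take $Z=\mathbb{P}^1_{\bR}$ with homogeneous coordinates $[u:v]$, $W=\mathbb{A}^1_{\bR}$ with coordinate $t$, and let $C\subset\mathbb{P}^1(\bR)\times\bR$ be the Zariski-closed subset cut out by $u^2=tv^2$. Its image under the projection to $W(\bR)=\bR$ is $[0,\infty)$ --- Euclidean-closed, as it must be since $\mathbb{P}^1(\bR)$ is compact, but not Zariski-closed, since the proper Zariski-closed subsets of $\bR$ are the finite ones. Here the scheme-theoretic image $p(\tilde C)$ is all of $\mathbb{A}^1$ (every fiber over $t\ne 0$ is a nonempty two-point scheme), so the containment $p(C)\subseteq p(\tilde C)(\bR)$ is genuinely strict, and neither Euclidean-closedness of $p(C)$ nor compactness of the fibers forces Zariski-closedness. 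The ``compactness-plus-elimination'' step sketched in your final paragraph therefore cannot be made to work; there is no argument to find because the conclusion is false.

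For what it is worth, the paper's own proof of the lemma is precisely the first sentence of your plan followed by the words ``we are simply restricting that map to real points,'' silently committing the very error you flag. The correct, usable output of elimination theory is only the one-sided containment $p(C)\subseteq p(\tilde C)(\bR)$, with the right-hand side Zariski-closed. In the downstream uses of the lemma (e.g.\ the proof of \Cref{pr:genlie}) this would suffice \emph{provided} one verifies that the reference pair $(g_0,h_0)$ avoids $p(\tilde C)(\bR)$, not merely $p(C)$; concretely, that there is no invariant subspace of the complexification $\fg^{\bC}$ rather than merely none of $\fg$ itself. That is a strictly stronger requirement and needs its own argument --- it already fails when $\fg$ is a complex simple Lie algebra regarded as real, since then $\fg^{\bC}$ splits as a sum of two proper ideals that are invariant under every $Ad_g$.
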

\begin{proof}
  Indeed, since $Z$ is projective $Z\times W\to W$ is ${}_z$closed \cite[Theorem 3.12]{har-ag}; we are simply restricting that map to real points.
\end{proof}

A number of further preparatory results, possibly of some independent interest, precede the proof of \Cref{th:zarop}. While most of the discussion goes through unchanged for tuples of arbitrary size $k\ge 2$, the proofs focus on $k=2$ in order to both fix ideas and simplify the notation.

\begin{proposition}\label{pr:genlie}
  Let $k\ge 2$ be a positive integer. For a connected, simple, linear Lie group $G$ the Zariski closure of $G^k_{\cat{prop}>0}$ is contained in $G^k_{\cat{prop}}$.
\end{proposition}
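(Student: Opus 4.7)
The plan is to exhibit a Zariski-closed subset of $G^k$ that sandwiches in between $G^k_{\cat{prop}>0}$ and $G^k_{\cat{prop}}$, built as the image of an incidence variety over the Grassmannian of $\fg := Lie(G)$. Concretely, consider
\[
X \;\subseteq\; G^k \times \mathrm{Gr}(\fg)_*
\]
consisting of pairs $((g_i), \fh)$ such that $\fh$ is a Lie subalgebra of $\fg$ with $0 < \dim \fh < \dim \fg$ and $\mathrm{Ad}(g_i)\fh = \fh$ for every $i$; here $\mathrm{Gr}(\fg)_*$ denotes the disjoint union of Grassmannians $\mathrm{Gr}(d,\fg)$ for $0 < d < \dim\fg$. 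Each defining condition is cut out by polynomial equations (the Lie-subalgebra condition $[\fh,\fh] \subseteq \fh$ is a closed condition on $\mathrm{Gr}(\fg)$; the intermediate-dimension constraint picks out a clopen union of components; and the $\mathrm{Ad}$-invariance $\mathrm{Ad}(g_i)\fh \subseteq \fh$ is algebraic in both $g_i$ and $\fh$), so $X$ is Zariski-closed.

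Since $\mathrm{Gr}(\fg)_*$ is projective, \Cref{le:grsisprop} makes the projection $p \colon G^k \times \mathrm{Gr}(\fg)_* \to G^k$ Zariski-closed, so $p(X)$ is Zariski-closed in $G^k$. The first key containment is $G^k_{\cat{prop}>0} \subseteq p(X)$: given $(g_i) \in G^k_{\cat{prop}>0}$, put $H := \overline{\langle g_i\rangle}$ and $\fh := Lie(H)$. Non-discreteness of $H$ gives $\dim\fh > 0$; properness of $H$ combined with connectedness of $G$ forces $\dim\fh < \dim\fg$, for otherwise $H \supseteq H^\circ = G$. Each $g_i \in H$ normalizes the identity component $H^\circ$ and therefore preserves $\fh$, so $((g_i), \fh) \in X$.

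The second key containment is $p(X) \subseteq G^k_{\cat{prop}}$, and this is where simplicity of $G$ enters. If $((g_i), \fh) \in X$, the normalizer $N_G(\fh) := \{g \in G : \mathrm{Ad}(g)\fh = \fh\}$ is a closed subgroup of $G$ containing each $g_i$, hence containing $\overline{\langle g_i\rangle}$. If one had $N_G(\fh) = G$, then $\mathrm{Ad}(G)$-invariance of $\fh$ together with connectedness of $G$ would make $\fh$ an ideal of $\fg$; simplicity of $\fg$ then forces $\fh \in \{0, \fg\}$, contradicting $0 < \dim\fh < \dim\fg$. So $N_G(\fh) \subsetneq G$ and $\overline{\langle g_i\rangle}$ is proper, i.e.\ $(g_i) \in G^k_{\cat{prop}}$. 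Chaining the inclusions gives $G^k_{\cat{prop}>0} \subseteq p(X) \subseteq G^k_{\cat{prop}}$ with $p(X)$ Zariski-closed, which is exactly the claim.

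I do not expect a serious obstacle here; the one point requiring care is bookkeeping around the algebraic structures, namely ensuring that $G^k$ carries a well-defined Zariski topology (the linearity hypothesis and the discussion in \Cref{subse:alggps}) and that the adjoint action $G \curvearrowright \mathrm{Gr}(\fg)$ is algebraic, so that \Cref{le:grsisprop} legitimately produces Zariski-closedness on the $G^k$ side.
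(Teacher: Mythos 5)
Your proposal is correct and follows essentially the same route as the paper's proof: construct an incidence variety of pairs (tuple, invariant intermediate subspace) over the Grassmannian, invoke \Cref{le:grsisprop} to see its projection to $G^k$ is Zariski-closed, and check that this closed set sandwiches between $G^k_{\cat{prop}>0}$ and $G^k_{\cat{prop}}$ (the latter via simplicity of $\fg$). The only cosmetic difference is that you cut out the Lie-subalgebra locus $[\fh,\fh]\subseteq\fh$ in $\mathrm{Gr}(\fg)_*$, whereas the paper works with arbitrary $\mathrm{Ad}$-invariant intermediate subspaces; this extra constraint is harmless (still closed) but unnecessary, since the paper's "$V$ is an ideal, hence trivial, by simplicity" argument already applies to plain subspaces.
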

\begin{proof}
  As warned, we work with $k=2$. The claim is that if $g_0,h_0\in G$ {\it do} topologically generate $G$, then some Zariski neighborhood of $(g_0,h_0)\in G^2$ contains no elements of $G^2_{\cat{prop}>0}$.

Consider the subvariety
  \begin{equation}\label{eq:xgrass}
    X:=\{\text{ non-trivial subspaces of $\mathfrak{g}:=Lie(G)$ of less-than-maximal dimension }\}
  \end{equation}
  of the Grassmannian $\mathrm{Gr}(\mathfrak{g})$, and the projection
  \begin{equation}\label{eq:eq:xggr}
    \pi:X\times G\times G\to G\times G
  \end{equation}
  onto the last two factors. The domain $X\times G\times G$ has a Zariski-closed subvariety $Y$ consisting of triples
  \begin{equation}\label{eq:ghv}
    (\text{space V},\ g,\ h)\text{ with }Ad_{g}(V)=V,\ Ad_{h}(V)=V.
  \end{equation}
  Now note that
  \begin{itemize}
  \item $(g_0,h_0)$ does {\it not} belong to the image of $Y$ through $\pi$ because $g$ and $h$ generate $G$ topologically and the latter's Lie algebra is simple;
  \item the image $\pi(Y)$ is Zariski-closed, because $\pi$ is, by \Cref{le:grsisprop} applied with \Cref{eq:xgrass} for the projective scheme $Z$ and ${\bf G}\times{\bf G}$ for $W$ (where ${\bf G}$ is the group scheme attached to $G$, as in \Cref{eq:gtoalg});    
  \item And finally, that closed image $\pi(Y)$ contains all of $G^2_{\cat{prop}>0}$, because for pairs $(g,h)$ therein the Lie algebra of the connected component $\overline{\langle g,h\rangle}_0$ is a space $V$ satisfying \Cref{eq:ghv}.
  \end{itemize}
  This finishes the proof.
\end{proof}

\begin{proposition}\label{pr:gendisc}
  Let $G$ be a connected, non-nilpotent, linear Lie group and $k\ge 2$ a positive integer. $G^k_{\cat{prop}}$ contains the closure of $G^k_{\cat{prop}=0}$
  \begin{enumerate}[(a)]
  \item\label{item:11} in the Zariski topology if $G$ is compact;
  \item\label{item:14} in the standard topology in general. 
  \end{enumerate}
\end{proposition}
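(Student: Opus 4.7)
The plan is to exhibit in each case a closed set $Z$ with
\[
  G^k_{\cat{prop}=0} \;\subseteq\; Z \;\subseteq\; G^k_{\cat{prop}},
\]
the second inclusion being the substantive one; the machinery is quite different in the two topologies.

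For the Zariski case (a), compactness forces every $\overline{\langle g_i\rangle}$ arising from $G^k_{\cat{prop}=0}$ to be finite, so I would invoke Jordan's theorem on finite linear groups: via an embedding $G \hookrightarrow GL_n(\bR)$ one obtains an integer $N = N(\dim G)$ such that any finite subgroup of $G$ has an abelian normal subgroup of index dividing $N$, and in particular $\gamma^N$ and $\delta^N$ commute for any $\gamma, \delta$ in that subgroup. Letting $F_k$ denote the free group on $k$ generators and setting
\[
  Z \;:=\; \bigcap_{w,\,w' \in F_k} \bigl\{(g_i) \in G^k : [w(g_i)^N,\, w'(g_i)^N] = e\bigr\},
\]
Noetherianity of the real algebraic variety $G^k$ makes $Z$ Zariski-closed, and $Z \supseteq G^k_{\cat{prop}=0}$ by Jordan. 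To prove $Z \subseteq G^k_{\cat{prop}}$, I would suppose $(g_i) \in Z$ had $\overline{\langle g_i\rangle} = G$ and work for a contradiction: the subgroup $H := \langle \gamma^N : \gamma \in \langle g_i\rangle\rangle$ is normal in $\langle g_i\rangle$ and, by the $Z$-condition, abelian, so its Euclidean closure $\overline H$ is a closed normal abelian subgroup of $G = \overline{\langle g_i\rangle}$. Structure theory of compact connected non-nilpotent Lie groups forces $\overline H \subseteq Z(G)$: the identity component $\overline H^0$, being a closed connected normal abelian subgroup, involves no non-abelian simple factor of $G$ and therefore lies in the central torus, while the component group $\overline H / \overline H^0$ is finite normal in the connected group $G/\overline H^0$ and hence central. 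Passing to $G/Z(G)$, which is positive-dimensional by non-nilpotency and center-free, every word $\bar w(\bar g_i)$ then satisfies $\bar w(\bar g_i)^N = e$, so $\langle \bar g_i\rangle \le G/Z(G) \hookrightarrow GL(\fg/\fz(\fg))$ is a finitely generated linear torsion group of exponent dividing $N$, and hence finite by Schur's theorem; this contradicts $\overline{\langle \bar g_i\rangle} = G/Z(G)$ being positive-dimensional.

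For the Euclidean case (b), I would instead invoke the Zassenhaus--Kazhdan--Margulis lemma: there exist a neighborhood $U$ of $e$ in $G$ and a constant $c = c(\dim G)$ such that $\langle \Gamma \cap U\rangle$ is nilpotent of class at most $c$ for every discrete subgroup $\Gamma \le G$. Given $(g_i) = \lim_n (g_i^{(n)})$ with each $\Gamma_n := \overline{\langle g_i^{(n)}\rangle}$ discrete, suppose for contradiction that $\overline{\langle g_i\rangle} = G$. Non-nilpotency of $G$ ensures that the $(c+1)$-fold iterated commutator map does not vanish identically on any neighborhood of $e$, so density of $\langle g_i\rangle$ lets one pick words $w_1,\ldots, w_{c+1}$ with each $w_j(g_i) \in U$ and with the left-nested iterated commutator $[w_1(g_i), [w_2(g_i), \ldots, [w_c(g_i), w_{c+1}(g_i)]\ldots]] \ne e$. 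For $n$ large the evaluations $w_j(g_i^{(n)})$ also lie in $U$, hence in $\Gamma_n \cap U$, so the same iterated commutator applied to the $n$-tuple is trivial in $\Gamma_n$; taking $n \to \infty$ contradicts nontriviality on $(g_i)$.

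The main obstacle is the inclusion $Z \subseteq G^k_{\cat{prop}}$ in (a): the Jordan condition alone admits tuples that still generate densely (for instance a finite-order element together with a topological generator of a maximal torus can easily satisfy $[g_1^N, g_2^N] = e$), and it is only the combination of centrality of closed normal abelian subgroups of compact non-nilpotent $G$ with Schur's finiteness theorem for linear torsion groups that makes the argument go through.
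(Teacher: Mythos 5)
Your proof is correct and rests on the same two pillars as the paper's --- Jordan's theorem for part (a) and the Zassenhaus lemma for part (b) --- but the execution differs in both cases. In (a) the paper argues locally: given a topologically generating tuple it produces two words whose $C!$-th powers fail to commute, which is a Zariski-open condition cutting out a neighborhood disjoint from $G^k_{\cat{prop}=0}$. Your set $Z$ is the global, closed-set dual of this, but your route to $Z\subseteq G^k_{\cat{prop}}$ is heavier than necessary: once you know $\overline H$ is a closed normal abelian subgroup, surjectivity of the $N$-th power map on a compact connected Lie group already makes $\{\gamma^N:\gamma\in\langle g_i\rangle\}$ dense, so $\overline H=G$ would be abelian --- no centrality of $\overline H$ and no appeal to Schur's finiteness theorem is needed. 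Moreover, the centrality claim $\overline H\subseteq Z(G)$ as you state it does not follow merely from centrality of $\overline H^0$ together with centrality of $\overline H/\overline H^0$ in $G/\overline H^0$: one also needs, say, that $\mathrm{Aut}$ of a compact abelian Lie group is discrete (so the conjugation action of the connected group $G$ on $\overline H$ must be trivial), or the direct computation that for each $h\in\overline H$ the map $g\mapsto[g,h]\in\overline H^0$ is a homomorphism with connected image of bounded exponent, hence trivial. This is a fixable but genuine gap in the step as written.

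For (b) both proofs invoke Zassenhaus and exploit non-nilpotency of $G$, but with different follow-through: the paper retains the refinement that $\Gamma\cap U$ lies in $\exp(\fn\cap V)$ for a nilpotent subalgebra $\fn$ of dimension $<\dim\fg$, and then runs a metric $\varepsilon$-net argument detecting points of $U$ uniformly far from every such $\exp(\fn\cap V)$; you instead use the uniform nilpotency-class bound and the continuity of $(c+1)$-fold iterated commutators, passing to the limit to reach the contradiction. Your version is, if anything, a little cleaner --- it replaces the metric estimate with an algebraic identity --- and both arguments are sound.
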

\begin{proof}
  Once more, set $k=2$ for simplicity. We have to show that given $g,h\in G$ that generate $G$ topologically, some neighborhood of $(g,h)$ (Zariski in one case, standard in the other) contains no $(g_0,h_0)$ generating a discrete subgroup of $G$. We treat the two cases separately.

  {\bf Case \Cref{item:11}: compact $G$.} Discrete subgroups $\langle g_0,h_0\rangle$ of $G$ are then finite. Jordan's theorem ensures that there is a normal abelian subgroup
  \begin{equation*}
    A\trianglelefteq \langle g_0,h_0\rangle
  \end{equation*}
  of index dominated by some bound $c=c(G)$ depending only on $G$: see e.g. \cite[Theorem 8.29]{ragh}, \cite[Theorems (36.13) and (36.14)]{cr} as well as \cite[Theorem A and the many references therein]{col-j}. Now, because $g$ and $h$ generate $G$, we can find words $w_1$ and $w_2$ in two free group generators and their inverses such that
  \begin{equation}\label{eq:wnc}
    w_1(g,h)^{C!}\text{ and }w_2(g,h)^{C!}\text{ do not commute.}
  \end{equation}
  This is a Zariski-open condition, thus defining a Zariski neighborhood $U$ of $(g,h)$ in $G^2$. Since $(g_0,h_0)$ clearly does not meet the condition, the pairs $(g_0,h_0)$ considered here will lie outside $U$.

  {\bf Case \Cref{item:14}: non-compact $G$.} A result of Zassenhaus (\cite[Theorem 8.16]{ragh}) states that for all $G$, compact or not, there is some neighborhood
  \begin{equation*}
    1\in U\subset G
  \end{equation*}
  such that for any discrete subgroup $\Gamma\subset G$ the intersection $\Gamma\cap U$ is contained in some connected nilpotent Lie subgroup of $G$. In fact, the proof (\cite[\S 8.21]{ragh}) shows more: one can ensure that $\Gamma\cap U$ is contained in the image through the exponential map
  \begin{equation*}
    \exp:\mathfrak{g}:=Lie(G)\to G
  \end{equation*}
  of the intersection $\mathfrak{n}\cap V$, where $V$ and $\mathfrak{n}$ are a small neighborhood of $0$ and a nilpotent Lie subalgebra of $\mathfrak{g}$ respectively.

  By our non-nilpotence assumption, there is an upper bound $<\dim \mathfrak{g}$ for the dimension of a nilpotent Lie subalgebra of $\mathfrak{g}$. Having fixed a metric $d$ topologizing $G$, it follows that if $U$ and $V$ as above are sufficiently small then
  \begin{equation*}
    \sup_{x\in U}d(x,\exp\left(\mathfrak{n}\cap V\right)) \le \sup_{x\in U}d(x,\Gamma\cap U)
  \end{equation*}
  is bounded below by some (strictly) positive constant $\varepsilon>0$ as $\mathfrak{n}$ ranges over the nilpotent Lie subalgebras of $\mathfrak{g}$. On the other hand though, $g,h\in G$ topologically generate $G$, and hence one can find finitely many products of $g^{\pm 1}$ and $h^{\pm 1}$ forming an $\varepsilon$-net in $U$ (i.e. a set within a distance $<\varepsilon$ of any point in $U$). This holds of any $(g_0,h_0)$ sufficiently close to $(g,h)$, so no such pair can generate a discrete subgroup $\Gamma$.
\end{proof}

\begin{remark}\label{re:latzardense}
  One cannot hope to have {\it Zariski} openness in \Cref{pr:gendisc} \Cref{item:14}, for arbitrary (non-compact) semisimple Lie groups. Consider, for instance, the case $G=SL_2(\bR)$ and denote $\Gamma:=SL_2(\bZ)$. The subgroup
  \begin{equation*}
    \Gamma^2 \subset G^2
  \end{equation*}
  is a {\it lattice} \cite[Definition 1.8]{ragh}, i.e. a discrete subgroup such that the pushforward of the Haar measure of $G^2$ to $G^2/\Gamma^2$ is finite. Because $G^2$ is semisimple without compact factors, it is a classical result that its lattices are Zariski-dense (e.g. \cite[Corollary 5.16]{ragh}). This means that no Zariski neighborhood of a topologically-generating pair $(g,h)\in G^2$ can avoid $\Gamma^2$.
\end{remark}

There is a version of \Cref{pr:genlie} valid for {\it semi}simple Lie groups, but the statement is a little involved. Let $\mathfrak{g}:=Lie(G)$ (the Lie algebra of $G$), decompose it as a direct sum
\begin{equation}\label{eq:liealgdec}
  \mathfrak{g}\cong \fs_1\oplus \cdots\oplus \fs_r
\end{equation}
of simple ideals (\cite[\S VI.2, Corollary 3]{ser-lie}), and denote by $\pi:\mathfrak{g}\to \fs_i$ the projections. The Lie algebra $\mathfrak{h}:=Lie(H)$ of a non-discrete, proper closed subgroup $H\le G$ might
\begin{enumerate}[(1)]
\item\label{item:17} map to a proper, positive-dimensional Lie subalgebra of $\fs_i$ through some $\pi_i$, in which case we could shift focus to $\fs_i$ and bring the discussion under the scope of \Cref{pr:genlie};
\item\label{item:18} {\it surject} onto every $\fs_i$ through the corresponding $\pi_i:\mathfrak{g}\to \fs_i$ (while still being proper: $\mathfrak{h}\ne \mathfrak{g}$);
\item\label{item:19} surject through {\it some} $\pi_i$ and be annihilated by others.  
\end{enumerate}
In this latter case the group $H\le G$ will map onto a discrete subgroup of some simple quotient of $G$, forcing us to handle the situation as in \Cref{pr:gendisc}, bifurcating over whether or not we are in the compact case (see also \Cref{re:latzardense}). We will see, however, that case \Cref{item:18} is better behaved. We first need the following observation.

\begin{lemma}\label{le:surjevery}
  Let $\fg$ be a semisimple real Lie algebra with its decomposition \Cref{eq:liealgdec} into simple summands and consider a proper Lie subalgebra $\fh\lneq \fg$ which surjects onto every simple factor $\fs_i$.

  We can then find two simple components $\fs_j\cong \fs_k$ such that the surjection $\fg\to \fs_j\oplus \fs_k$ maps $\fh$ onto the graph of an isomorphism $\fs_j\cong \fs_k$.
\end{lemma}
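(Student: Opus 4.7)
I would argue by induction on the number $r$ of simple summands in \Cref{eq:liealgdec}, with the $r=2$ case playing the role of a Goursat-style lemma for Lie algebras. For that base case, assume $\fh\lneq \fs_1\oplus \fs_2$ surjects onto both factors, and consider the ideals $\fa_i:=\fh\cap \fs_i$ of $\fh$. Because $[\fs_1,\fs_2]=0$ and $\fh$ surjects onto $\fs_i$, a short computation shows that $\fa_i$ is in fact an ideal of the simple algebra $\fs_i$, hence either $0$ or $\fs_i$. If either $\fa_i=\fs_i$, then combined with surjectivity onto the other factor one concludes $\fh=\fs_1\oplus\fs_2$, contradicting properness. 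So $\fa_1=\fa_2=0$, the projections $\fh\to \fs_i$ are both Lie isomorphisms, and $\fh$ is the graph of the induced isomorphism $\fs_1\cong \fs_2$.

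For the inductive step, suppose the result holds up to $r-1$ simple summands, and take a proper $\fh\lneq \fs_1\oplus\cdots\oplus \fs_r$ surjecting onto every $\fs_i$. Set
\begin{equation*}
  \fh':=\pi_{1,\ldots,r-1}(\fh)\subseteq \fs_1\oplus\cdots\oplus \fs_{r-1}.
\end{equation*}
If $\fh'$ is a \emph{proper} subalgebra, then since it still surjects onto each $\fs_i$ for $i\le r-1$, the inductive hypothesis provides indices $j,k\le r-1$ with $\fs_j\cong \fs_k$ and $\pi_{jk}(\fh)=\pi_{jk}(\fh')$ the desired graph. Otherwise, $\fh'$ is all of $\fs_1\oplus\cdots\oplus \fs_{r-1}$, and we analyze $\fk:=\ker\bigl(\fh\to \fh'\bigr)\subseteq \fs_r$. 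The same commutator trick as in the base case (using $[\fs_i,\fs_r]=0$ for $i<r$ together with surjectivity $\fh\twoheadrightarrow \fs_r$) shows $\fk$ is an ideal of $\fs_r$, hence $\fk=0$ or $\fk=\fs_r$; the latter would force $\fh=\fg$, contradicting properness.

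So $\fk=0$ and $\fh$ is the graph of a \emph{surjective} homomorphism $\phi:\fs_1\oplus\cdots\oplus \fs_{r-1}\twoheadrightarrow \fs_r$. Since the ideals of a semisimple sum of simples are precisely the partial sums, $\ker\phi=\bigoplus_{i\in S}\fs_i$ for some $S\subseteq\{1,\ldots,r-1\}$, and simplicity of the target forces $\{1,\ldots,r-1\}\setminus S$ to be a singleton $\{j\}$. Then $\phi$ restricts to a Lie isomorphism $\fs_j\xrightarrow{\sim}\fs_r$, and unraveling definitions gives $\pi_{jr}(\fh)=\{(x,\phi(x)):x\in \fs_j\}$, the graph of this isomorphism. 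This completes the induction.

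The main technical obstacle is the repeated observation that a kernel-type subalgebra of $\fh$ is actually an ideal of the ambient simple factor, rather than merely of $\fh$; once that is in hand, simplicity lets us dichotomize cleanly and the graph structure is forced on us. Everything else is bookkeeping, and the induction machinery lets us avoid running a full Goursat classification for all $r$ factors simultaneously.
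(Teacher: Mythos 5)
Your proof is correct, and it takes a genuinely different route from the paper's. The paper first reduces to the case where $\fg$ is homogeneous (all simple summands isomorphic) via the observation that surjections onto semisimple Lie algebras split, and then runs a minimality argument: it picks the smallest $\ell$ for which $\fh$ fails to surject onto some sum $\fg_{small}\cong\fs^{\oplus\ell}$ of $\ell$ simple summands, and extracts the graph from the structure of the image $\fh_{small}\subset\fg_{small}$. You instead induct directly on the number $r$ of simple summands, with the $r=2$ case serving as a Goursat-style lemma. The key technical tool in your argument is the repeated commutator observation that the kernel-type subalgebras $\fh\cap\fs_i$ are ideals of the ambient \emph{simple} factor $\fs_i$ (not merely of $\fh$), which forces a clean $0$-or-everything dichotomy; you thereby avoid invoking the splitting of surjections onto semisimple Lie algebras and the isotypic decomposition altogether. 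Both proofs work; yours is arguably more modular and elementary, while the paper's puts the isomorphism type $\fs_j\cong\fs_k$ in front from the start via the homogeneous reduction, whereas in your version it emerges as an output of the Goursat analysis.

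One small clarifying remark, though not a gap: in Case 1 of your inductive step, the identity $\pi_{jk}(\fh)=\pi_{jk}(\fh')$ is the (correct, but worth stating) observation that projecting first to $\fs_1\oplus\cdots\oplus\fs_{r-1}$ and then to $\fs_j\oplus\fs_k$ agrees with projecting directly, since $j,k\le r-1$; this is what lets the inductive conclusion about $\fh'$ transfer verbatim to $\fh$.
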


The following concept will recur a number of times.

\begin{definition}\label{def:homog}
  A semisimple Lie algebra is {\it homogeneous} if its simple summands are mutually isomorphic. A connected semisimple Lie group is homogeneous if its Lie algebra is. 
\end{definition}

\pf{le:surjevery}
\begin{le:surjevery}
  We may as well assume that $\fg$ is homogeneous; indeed, if
  \begin{equation*}
    \fg=\bigoplus_{\ell} \fc_{\ell}
  \end{equation*}
  is the decomposition into homogeneous summands and $\fh$ surjects onto each $\fc_{\ell}$, then
  \begin{itemize}
  \item it must contain a copy of each $\fc_{\ell}$ because surjections onto semisimple Lie algebras split \cite[\S VI.4, Theorem 4.1]{ser-lie};
  \item hence it must coincide with $\fg$ because the $\fc_{\ell}$ have no common simple summands.
  \end{itemize}
  The standing assumption will thus be that $\fg$ itself is homogeneous to begin with, i.e. all $\fs_i$ in \Cref{eq:liealgdec} are isomorphic to a common $\fs$. There will then be a minimal $2\le \ell\le r$ such that
  \begin{itemize}
  \item $\fh$ surjects onto the sum of any $\ell-1$ factors of
    \begin{equation*}
      \fg\cong \bigoplus_{i=1}^r \fs
    \end{equation*}
  \item but it does {\it not} surject onto some factor $\fg_{small}\cong \fs^{\oplus \ell}$. 
  \end{itemize}
  The image $\fh\twoheadrightarrow \fh_{small}$ through the surjection $\fg\twoheadrightarrow \fg_{small}$ surjects onto both displayed summands of 
  \begin{equation}\label{eq:l1l}
    \fg_{small} \cong \fs^{\oplus (\ell-1)}\oplus \fs
  \end{equation}
  and hence must be isomorphic to $\fs^{\oplus(\ell-1)}$. The surjection onto the first summand in \Cref{eq:l1l} must thus be an isomorphism, and hence permute the simple summands of $\fh_{small}\cong \fs^{\oplus(\ell-1)}$ (because those summands are precisely the simple ideals of $\fh_{small}$ \cite[\S VI.2, Corollary 3]{ser-lie}).

  On the other hand, the surjection of $\fh_{small}$ onto the {\it right-hand} summand $\fs$ in \Cref{eq:l1l} must restrict to an isomorphism on one of the simple summands (call it $\ft\cong \fs$) of $\fh_{small}\cong \fs^{\oplus (\ell-1)}$ and annihilate the others. In conclusion, the map of $\fh_{small}$ to
  \begin{equation*}
    (\text{image of $\ft$})\oplus(\text{the trailing $\fs$ in \Cref{eq:l1l}}) 
  \end{equation*}
  identifies $\ft$ with the graph of an isomorphism and annihilates the other simple components of $\fh_{small}$ (which in turn was the image of $\fh$ in $\fg_{small}$). This concludes the proof of the claim.
\end{le:surjevery}

Note, incidentally, the following Helly-type consequence. This is in reference to Helly's theorem, stating that a family of convex subsets of $\bR^d$ any $d+1$ of which intersect must have non-empty intersection \cite[p.102]{dgk-helly}. The underlying ideas have broad combinatorial reach past geometry / convexity (e.g. \cite[\S 11]{bol-bk}), and it is manifest that \Cref{cor:hly} fits in this general circle of ideas.

\begin{corollary}\label{cor:hly}
  Let $\fh\le \fg$ be a Lie subalgebra of a semisimple Lie algebra. If $\fh$ surjects onto the direct sum of any two simple summands of $\fg$, then it must coincide with $\fg$. \qedhere
\end{corollary}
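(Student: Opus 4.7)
The argument is short and reduces immediately to \Cref{le:surjevery}. My plan is to proceed by contradiction: suppose $\fh\lneq \fg$ is a proper Lie subalgebra satisfying the hypothesis that $\fh\twoheadrightarrow \fs_i\oplus \fs_j$ for every pair $(i,j)$ of distinct simple-summand indices in the decomposition \Cref{eq:liealgdec}.

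First I would observe that the ``pair'' hypothesis subsumes the ``singleton'' hypothesis of \Cref{le:surjevery}: composing any surjection $\fh\twoheadrightarrow \fs_i\oplus\fs_j$ with projection onto its first factor shows that $\fh$ surjects onto every individual simple summand $\fs_i$ (this needs $\fg$ to have at least two simple summands, which is the only case in which the statement carries content).

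With the hypothesis of \Cref{le:surjevery} now verified, I would invoke it to produce two mutually isomorphic simple summands $\fs_j\cong \fs_k$ for which the composition
\begin{equation*}
  \fh\hookrightarrow \fg\twoheadrightarrow \fs_j\oplus \fs_k
\end{equation*}
has image equal to the graph $\Gamma$ of a Lie-algebra isomorphism $\fs_j\xrightarrow{\sim}\fs_k$. But $\Gamma$ is a \emph{proper} subalgebra of $\fs_j\oplus \fs_k$ --- its dimension is $\dim\fs_j$, rather than $\dim\fs_j+\dim\fs_k$ --- which flatly contradicts the assumption that $\fh$ surjects onto the pair $\fs_j\oplus \fs_k$.

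I do not expect any real obstacle: the corollary is essentially a repackaging of \Cref{le:surjevery}, and the only minor point worth spelling out in writing is the trivial reduction from the pair-surjectivity hypothesis to the single-factor surjectivity needed to apply the lemma.
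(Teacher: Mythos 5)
Your proof is correct and matches the paper's intent exactly: \Cref{cor:hly} is stated there with no written proof (it ends directly in a QED box), being regarded as an immediate consequence of \Cref{le:surjevery}. Your contradiction argument --- projecting the pair-surjectivity hypothesis to obtain single-factor surjectivity, invoking the lemma to produce $\fs_j\cong\fs_k$ onto whose sum $\fh$ maps as the graph of an isomorphism (a proper subalgebra), and noting this contradicts the hypothesized surjectivity onto $\fs_j\oplus\fs_k$ --- is precisely the natural unpacking of that implication.
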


We record some terminology for later reuse.

\begin{definition}\label{def:thn}
  Let $\fg$ be a semisimple Lie algebra with its decomposition \Cref{eq:liealgdec} into simple summands and $\fh\le \fg$ a Lie subalgebra. For a family $\cS:=\{\fs_i\}_{i\in I}$, $I\subseteq 1..r$ of summands we say that $\fh$ is {\it $\cS$-thin} if it surjects onto each individual $\fs_i$, $i\in I$ but maps onto a proper subalgebra of $\bigoplus_{i\in I}\fs_i$.
\end{definition}

\begin{proposition}\label{pr:surjevery}
  Let $G$ be a connected, semisimple, linear Lie group $G$, $k\ge 2$ a positive integer, and denote by $G^k_{\bullet}\subset G^k$ the set of tuples $(g_i)\in G^k$ that
  \begin{itemize}
  \item topologically generate a non-discrete, proper, closed subgroup $H\le G$,
  \item whose Lie algebra $\mathfrak{h}:=Lie(H)$ surjects onto every simple quotient of the Lie algebra $\mathfrak{g}:=Lie(G)$.
  \end{itemize}
  Then, the Zariski closure of $G^k_{\bullet}$ is contained in $G^k_{\cat{prop}}$. 
\end{proposition}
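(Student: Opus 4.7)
Mirroring the strategy of \Cref{pr:genlie}, my plan is to contain the Zariski closure of $G^k_{\bullet}$ in the image, under projection to $G^k$, of a Zariski-closed subvariety of $Z \times G^k$ for some projective $Z$, and then invoke \Cref{le:grsisprop}. The key input is \Cref{le:surjevery}: for $(g_i) \in G^k_{\bullet}$ with generated Lie algebra $\fh \lneq \fg$, there exist indices $j \ne \ell$ with $\fs_j \cong \fs_\ell$ such that the projection $\fg \to \fs_j \oplus \fs_\ell$ sends $\fh$ onto the graph of a Lie algebra isomorphism $\phi : \fs_j \to \fs_\ell$. Since $\fh$ is $Ad_{g_i}$-invariant and this projection is $G$-equivariant (the $\fs_i$ being ideals), $\phi$ is a nonzero linear intertwiner from $Ad_{g_i}|_{\fs_j}$ to $Ad_{g_i}|_{\fs_\ell}$ for every $i$.

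A direct Grassmannian adaptation of \Cref{pr:genlie} stalls because $\fs_j$ and $\fs_\ell$ are themselves $G$-invariant subspaces of $\fs_j \oplus \fs_\ell$, trivializing the resulting projection to $G^k$. To sidestep this, I take $Z_{j,\ell} := \bP\bigl(\mathrm{Hom}(\fs_j, \fs_\ell)\bigr)$, which is projective and tautologically excludes $\phi = 0$; for each of the finitely many pairs $j \ne \ell$ with $\fs_j \cong \fs_\ell$, set
\begin{equation*}
Y_{j,\ell} := \bigl\{ ([\phi], g_1, \ldots, g_k) \in Z_{j,\ell} \times G^k \ :\ \phi \circ Ad_{g_i}|_{\fs_j} = Ad_{g_i}|_{\fs_\ell} \circ \phi,\ 1 \le i \le k \bigr\}.
\end{equation*}
The defining conditions are linear in $\phi$ (hence scale-invariant, so they descend to $\bP$) and polynomial in the $g_i$, making $Y_{j,\ell}$ Zariski-closed; \Cref{le:grsisprop} then forces $\pi(Y_{j,\ell}) \subseteq G^k$ to be Zariski-closed, and by the previous paragraph $G^k_{\bullet} \subseteq \bigcup_{j,\ell} \pi(Y_{j,\ell})$, so the same containment passes to the Zariski closure of $G^k_{\bullet}$.

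It remains to check that no topologically generating $(g_i)$ lies in any $\pi(Y_{j,\ell})$. Given such a pair $[\phi], (g_i)$, the locus $\{g \in G : \phi \circ Ad_g|_{\fs_j} = Ad_g|_{\fs_\ell} \circ \phi\}$ is a Zariski-closed subgroup of $G$; if it contains topological generators, it contains the Euclidean closure (and thus the Zariski closure) of $\langle g_i\rangle$, which is all of $G$. So $\phi$ becomes a nonzero $G$-equivariant map between the adjoint representations $\rho_j, \rho_\ell$ of $G$ on $\fs_j, \fs_\ell$. Both are irreducible (since $\fs_j$ and $\fs_\ell$ are simple), and their Lie algebra kernels $\bigoplus_{m \ne j} \fs_m$ and $\bigoplus_{m \ne \ell} \fs_m$ differ for $j \ne \ell$, so they are non-isomorphic; Schur's lemma forbids such a map, yielding the contradiction. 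The degenerate case $G^k_{\bullet} = \emptyset$ (when no pair $j \ne \ell$ with $\fs_j \cong \fs_\ell$ exists) is immediate from \Cref{le:surjevery}.

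The principal technical obstacle is identifying the right projective parameter space: the Grassmannian route used in \Cref{pr:genlie} is blocked here by the $G$-invariance of the simple summands, and the resolution is to move to $\bP\bigl(\mathrm{Hom}(\fs_j,\fs_\ell)\bigr)$, where $\phi = 0$ is excluded by construction and the intertwining becomes a system of projectively meaningful linear equations.
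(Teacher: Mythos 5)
Your proof is correct, and it takes a genuinely different route from the paper's. Both start from \Cref{le:surjevery}, which gives a graph of an isomorphism $\phi:\fs_j\to\fs_\ell$ sitting inside the projection of $\fh$. The paper then parametrizes the obstruction by the $(\fs_j,\fs_k)$-thin Lie subalgebra $\fh$ itself, viewed as a point of the Grassmannian, and relies on \Cref{le:diagalg} (which in turn invokes finiteness of outer automorphisms of $\fs^{\bC}$) to establish Zariski-closedness of the relevant incidence variety, concluding as in \Cref{pr:genlie} because thin subalgebras are not ideals. You instead discard $\fh$ entirely and parametrize by the intertwiner $[\phi]\in\bP(\mathrm{Hom}(\fs_j,\fs_\ell))$; the incidence condition $\phi\circ Ad_{g_i}|_{\fs_j}=Ad_{g_i}|_{\fs_\ell}\circ\phi$ is linear in $\phi$ and polynomial in the $g_i$, so closedness is automatic and \Cref{le:diagalg} is not needed. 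Your closing contradiction is also different: where the paper argues ``thin subalgebras are not ideals,'' you observe that the intertwining locus in $G$ is a Zariski-closed (hence Euclidean-closed) subgroup, so a topologically generating tuple would promote $\phi$ to a $G$-intertwiner; then Schur's lemma between the irreducible but non-isomorphic adjoint $G$-modules $\fs_j$ and $\fs_\ell$ (distinguished by their $\fg$-annihilators $\bigoplus_{m\ne j}\fs_m\ne\bigoplus_{m\ne\ell}\fs_m$) forbids a nonzero intertwiner. The net effect is a shorter argument with a smaller footprint of auxiliary results---you trade \Cref{le:diagalg} and the automorphism-group finiteness for a direct application of Schur's lemma---while the paper's Grassmannian formulation keeps closer structural parallelism with \Cref{pr:genlie}, which is reused verbatim.
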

\begin{proof}
  We once more set $k=2$, and fix a direct-sum decomposition \Cref{eq:liealgdec}. The assumption that $\mathfrak{h}$ is proper and surjects onto each simple component $\fs_i$ of that decomposition implies via \Cref{le:surjevery} that there are two isomorphic components $\fs_{j}\cong \fs_k$ such that $\fh$ is $(\fs_j,\fs_k)$-thin in the sense of \Cref{def:thn}. Note, crucially, that thin Lie subalgebras of $\fg$ cannot be ideals, as the ideals are precisely the sums of $\fs_i$s \cite[\S VI.2, Corollary 3]{ser-lie}.
  
  According to \Cref{le:diagalg} the space of $(\fs_j,\fs_k)$-thin Lie subalgebras of $\fg$ is Zariski-closed in the Grassmannian $\mathrm{Gr}(\fg)$, so we can replicate the proof of \Cref{pr:genlie}:
  \begin{itemize}
  \item the thin Lie subalgebras $\fh\subset \fg$ constitute the real points of a projective subscheme $Z$ of the Grassmannian of $\fg_{\bC}$;
  \item hence we can apply \Cref{le:grsisprop} to $Z$ and $W:={\bf G}\times{\bf G}$ to conclude that the set of pairs of elements that lave a thin Lie subalgebra invariant is ${}_z$closed;
  \item and it cannot contain a pair that generates topologically, because as noted, thin Lie subalgebras cannot be ideals.
  \end{itemize}
  We have thus reached the conclusion.
\end{proof}

\begin{lemma}\label{le:diagalg}
  Let $\fs$ be a simple real Lie algebra. The space of proper Lie subalgebras of $\fs\times \fs$ that project onto each factor is ${}_z$closed in the Grassmannian $\mathrm{Gr}(\fs\times \fs)$.
\end{lemma}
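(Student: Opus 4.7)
The plan is to identify the subalgebras in question with graphs of Lie algebra automorphisms of $\fs$ via a Goursat-style dichotomy, then exhibit them as the zero locus of polynomial equations on the Grassmannian.

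First I would run the Goursat dichotomy. Let $\fh \le \fs \times \fs$ be proper with both projections $\pi_i \colon \fh \to \fs$ surjective, and set $\fa_i := \ker(\pi_{3-i}|_{\fh})$, identified with a subspace of the $i$-th copy of $\fs$. Each $\fa_i$ is an ideal of $\fh$ as the kernel of a Lie algebra morphism; the surjectivity of $\pi_i|_{\fh}$ upgrades $\fa_i$ to an ideal of the ambient $\fs$. Simplicity of $\fs$ then forces $\fa_i \in \{0,\fs\}$, and the case $\fa_i = \fs$, combined with surjectivity of the other projection, would entail $\fh = \fs \times \fs$ --- contrary to properness. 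Hence both kernels vanish, each $\pi_i|_{\fh}$ is a linear bijection, and $\fh = \{(x,\phi(x)) : x \in \fs\}$ for a unique Lie algebra automorphism $\phi \in \mathrm{Aut}(\fs)$.

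Second, I would realize our set as the image of the real algebraic group $\mathrm{Aut}(\fs) \subseteq \mathrm{GL}(\fs)$ under the injective graph morphism into the component $\mathrm{Gr}_{\dim\fs}(\fs \times \fs)$. The defining polynomial identities $\phi[x,y] = [\phi x,\phi y]$ cut out $\mathrm{Aut}(\fs)$ inside $\mathrm{GL}(\fs)$ as a Zariski-closed subvariety; transporting them through the graph parametrization into Pl\"ucker coordinates and combining with the closed Lie-subalgebra conditions (the Jacobi identity being polynomial in Pl\"ucker coordinates), one should exhibit the set as the vanishing locus of an explicit system of polynomials on the Grassmannian.

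The main obstacle I anticipate is the tension between the apparent openness of the ``both projections are bijective'' condition --- which is prima facie only Zariski-locally-closed, cut out by nonvanishing of top minors --- and the Zariski-closedness the statement demands. This is where the real content lies: one has to verify that the polynomial conditions inherited from $\mathrm{Aut}(\fs)$ via the graph parametrization effectively replace the open determinant-nonvanishing condition by a genuinely closed polynomial identity. I would attack this component-wise inside $\mathrm{Gr}_{\dim\fs}(\fs \times \fs)$, using the Goursat classification applied to any hypothetical boundary point to enumerate the possible failure modes (ideal pieces $\fa_i$ becoming non-trivial in the limit) and checking, case by case via the simple structure of $\fs$, that the polynomial conditions cut them out.
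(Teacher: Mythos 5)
Your Goursat reduction is correct and matches the paper's first step: a proper subalgebra $\fh\le\fs\times\fs$ surjecting onto both factors is forced, by simplicity of $\fs$, to be exactly the graph $\Gamma_\alpha$ of an automorphism $\alpha\in\mathrm{Aut}(\fs)$. The trouble begins at the step you yourself flag as the main obstacle, and it is fatal for the strategy as described: the image of the graph map $\mathrm{Aut}(\fs)\to\mathrm{Gr}(\dim\fs,\;\fs\times\fs)$ is \emph{not} Zariski-closed when $\fs$ is non-compact, so no amount of transporting the identities $\phi[x,y]=[\phi x,\phi y]$ into Pl\"ucker coordinates will cut it out. Concretely, take $\fs=\mathfrak{sl}_2(\bR)$ with the standard basis $h,e,f$ and set $g_t=\mathrm{diag}(t,t^{-1})$. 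Then $\Gamma_{\mathrm{Ad}_{g_t}}$ is spanned by $(h,h)$, $(e,t^2e)$, $(f,t^{-2}f)$, and after rescaling the last two basis vectors one sees that as $t\to\infty$ these planes converge in the Grassmannian to
\[
V_\infty=\mathrm{span}_\bR\bigl\{(h,h),\ (0,e),\ (f,0)\bigr\},
\]
which is a proper Lie subalgebra whose projection to the first factor is only the Borel $\mathrm{span}\{h,f\}$. So $V_\infty$ lies in the closure of the set in question but not in the set. The degeneration is exactly of the type you anticipate (the kernels $\fa_i$ become non-trivial in the limit), but they become non-trivial \emph{proper} subspaces, not ideals, so no case analysis relying on simplicity will restore closedness.

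The paper's proof therefore aims at something genuinely weaker: after the same Goursat step it passes to $\fs^\bC$ and uses finiteness of the outer automorphism group to write $\alpha^\bC=\alpha_i\circ\mathrm{Ad}_g$ for one of finitely many representatives $\alpha_i$; since $\mathrm{Ad}_g$ always fixes a nonzero $s\in\fs^\bC$, the graph $\Gamma_{\alpha^\bC}$ contains a line through $(s,\alpha_i(s))$. The condition ``$V$ is a proper Lie subalgebra and $V^\bC$ contains a line of the form $\bR(s,\alpha_i(s))$ for some $i$'' defines a Zariski-closed set (the incidence with the projective variety $\{[s:\alpha_i(s)]\}\subset\bP(\fs^\bC\times\fs^\bC)$ is closed because $\bP(\fs^\bC)$ is complete), which contains the set of graphs but visibly excludes the ideals $\fs\times 0$ and $0\times\fs$. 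That containment in a closed set of non-ideals is what the application in \Cref{pr:surjevery} actually consumes, via \Cref{le:grsisprop}. If you want to salvage your proposal, aim for that weaker target (a Zariski-closed superset avoiding ideals) rather than for closedness of the image of $\mathrm{Aut}(\fs)$ itself, and be aware that the lemma as literally stated does not survive the $\mathfrak{sl}_2(\bR)$ example above.
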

\begin{proof}
  Let $\fh\le \fs\times\fs$ be a Lie subalgebra as in the statement. It must then contain an isomorphic copy of $\fs$ identifiable, as a subalgebra of $\fs\times\fs$, with the graph
  \begin{equation*}
    \Gamma_{\alpha}:=\{(x,\alpha(x))\ |\ x\in \fs\}\le \fs\times \fs
  \end{equation*}
  of an automorphism $\alpha:\fs\cong \fs$. The automorphism group of the complexification $\fs^{\bC}:=\fs\otimes_{\bR}\bC$ has the inner automorphisms as a finite-index subgroup (\cite[\S 16.5]{hum}), so
  \begin{equation*}
    \alpha^{\bC}:\fs^{\bC}\to \fs^{\bC}
  \end{equation*}  
  is of the form $\alpha_i\circ Ad_g$ for one of finitely many automorphisms $\alpha_i$ (with the list of possible $\alpha_i$ depending only on the isomorphism class of $\fs$) and some element $g$ of the adjoint group of $\fs^{\bC}$. In particular, since $Ad_g$ will certainly leave some element $0\ne s\in \fs^{\bC}$ invariant, the graph $\Gamma_{\alpha^{\bC}}$ must contain a non-zero element of the form  
  \begin{equation}\label{eq:specpair}
    (s,\alpha_i(s))\text{ for one of the finitely many }\alpha_i.
  \end{equation}
  All in all, the Lie algebra $\fh$ is such that
  \begin{itemize}
  \item when complexified it is proper,
  \item and contains lines through elements of the form \Cref{eq:specpair}. 
  \end{itemize}
  It is now clear that this constitutes a Zariski-closed condition.
\end{proof}

\begin{remark}
  Simplicity (or more generally, semisimplicity) is essential in \Cref{le:diagalg}: for 1-dimensional $\fs$, for instance, the space of 1-dimensional subalgebras of $\fs\times \fs$ that surject onto both factors can be identified with the complement of two points in $\mathrm{Gr}(1,\fs\times\fs)\cong \bS^1$.
\end{remark}

\pf{th:zarop}
\begin{th:zarop}
  We focus mainly on the compact case (and $k=2$), indicating only briefly how the proof can be modified for part \Cref{item:16}.
  
  {\bf \Cref{item:15}: compact $G$.} Sine $G$ surjects with finite central kernel onto a product $S_1\times\cdots \times S_r$ of simple, center-less compact connected Lie groups (\cite[Corollary 9.20]{hm}), we may as well assume
  \begin{equation}\label{eq:gdec}
    G\cong S_1\times \cdots\times S_r
  \end{equation}
  to begin with, with simple center-less $S_i$. In general, for objects pertaining to $G$ (elements, subgroups, etc.) we denote their components in $S_i$ by left-hand subscripts; to illustrate, ${}_ig$ is the component of $g$ in $S_i$. Fix a pair $(g_0,h_0)\in G^2$ that generates $G$ (topologically, which phrase we will often omit for brevity), and consider pairs of elements $g,h\in G$ that do {\it not}. We have to argue that some Zariski-open neighborhood of $(g_0,h_0)$ contains no such $(g,h)$.
  
  Denote
  \begin{equation*}
    H:=\overline{\langle g,h\rangle},\quad \fl:=Lie(H). 
  \end{equation*} 
  Now, $g,h$ might fail to generate for a number of reasons, as in the discussion preceding \Cref{pr:surjevery}:
  \begin{enumerate}[(1)]
  \item\label{item:12} A component ${}_i\fl\le Lie(S_i)$ of the Lie algebra $\fl$ might be proper but positive-dimensional,
  \item\label{item:20} all ${}_i\fl$ are full, though $\fl$ as a whole is proper in $\fg$.
  \item\label{item:13} or the ${}_i\fl$ might all be either full or trivial, with at least one trivial.
  \end{enumerate}
  We consider these in turn.
 
  {\bf Case \Cref{item:12}: For some $i$ we have $0<\dim {}_i \fl< \dim S_i$.} It follows from \Cref{pr:genlie} that the neighborhood of $(g,h)$ obtained as the pullback through $G^2\to S_i^2$ of some Zariski-open neighborhood of $({}_ig,{}_ih)\in S_i^2$ cannot contain any non-generating $(g_0,h_0)$ conforming to the present Case \Cref{item:12}.  

  {\bf Case \Cref{item:20}: $\dim {}_i \fl = \dim S_i$ for all $i$.} Here we can appeal to \Cref{pr:surjevery} instead (of \Cref{pr:genlie}).
  
  {\bf Case \Cref{item:13}: Some component ${}_i\fl$ is trivial.} Or in other words, the subgroup of $S_i$ generated by the $i^{th}$ components ${}_ig_0$ and ${}_ih_0$ is finite. The conclusion follows as in Case \Cref{item:12}, this time appealing to \Cref{pr:gendisc} \Cref{item:11} in place of \Cref{pr:genlie}.

  {\bf \Cref{item:16}: arbitrary $G$.} We can proceed essentially as before, first assuming a decomposition \Cref{eq:gdec} upon modding out a central discrete subgroup (e.g. by passing to the {\it adjoint quotient} of $G$ \cite[Chapter II \S 5]{helg}) and then, when dealing with discrete subgroups, making use of part \Cref{item:14} of \Cref{pr:gendisc} rather than part \Cref{item:11}.
\end{th:zarop}

\begin{remark}\label{re:zarifconn}
  The only reason we had to downgrade the Zariski topology to the Euclidean one in \Cref{th:zarop} \Cref{item:16} is that we had to handle discrete subgroups, which in the non-compact case are harder to control (see \Cref{re:latzardense}). If we work instead only with the pairs $(g,h)\in G^2$ that generate {\it connected} (or at least non-discrete) Lie subgroups, the subset of generating pairs will be open in the relative Zariski topology for arbitrary (possibly non-compact) $G$.
\end{remark}

The openness statement in \Cref{th:zarop} is characteristic of semisimplicity, in the sense that it fails (even for the finer, standard topology) for {\it every} non-semisimple compact, connected Lie group $G$.

\begin{example}\label{ex:t23}
  Indeed, such groups are, up to quotienting out a finite central subgroup, of the form
  \begin{equation*}
    G\cong T\times S_1\times\cdots\times S_r
  \end{equation*}
  for some (non-trivial) torus $T$ and simple compact connected $S_i$ (\cite[Corollary 6.16, Theorem 6.18]{hm}). Then, the elements of $G$ whose $T$-components are torsion form a dense subset. Two such elements $g,h\in G$ cannot generate $G$ topologically, because their images through $G\to T$ do not generate the latter.
\end{example}

\subsection{Generation by connected subgroups}\label{subse:conn}

The point of interest here will be extent to which a Lie group $G$ (as before: semisimple, connected, linear) can be generated (topologically) by two closed connected Lie subgroups, typically of fixed isomorphism class. One might hope for results in the spirit of \Cref{th:zarop}, to the effect that the space of pairs of circles that generate is well-behaved. It is not immediate how to make sense of this though: having fixed $G$, the circle subgroups $\bS^1\le G$ (say) do not constitute a well-behaved space. This is because a line
\begin{equation*}
  \{(cx_1,\cdots,cx_n)\ |\ c\in \bR\}\subset \bR^n\cong Lie(\bR^n/\bZ^n),\ n\ge 2
\end{equation*}
in the Lie algebra of a torus constitutes the Lie algebra of a circle subgroup precisely when the ratios between the coordinates $x_i$ are rational, so the space of such vectors $(x_i)$ is, roughly speaking, a product of copies of $\bQ$.

On the other hand, the Lie subalgebras of $\fg:=Lie(G)$ of fixed dimension form a closed algebraic subvariety of the Grassmannian. This suggests that we parametrize connected subgroups of $G$ by their Lie algebras, and recast or openness/density results as statements about (pairs of) Lie subalgebras of $\fg$. One example:

\begin{theorem}\label{th:mostsubalg}
  Let $G$ be a connected, semisimple, linear Lie group and $k\ge 2$ a positive integer. The space of tuples $(\fg_i)_{i=1}^k$ of Lie subalgebras
  \begin{equation*}
    \fg_i\le \fg:=Lie(G)
  \end{equation*}
  whose underlying Lie subgroups $G_i\le G$ generate the latter topologically is Zariski-open in the Cartesian power $\mathrm{Gr}(\fg)^k$ of the Grassmannian of $\fg$.
\end{theorem}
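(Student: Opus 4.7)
The plan is to reduce \Cref{th:mostsubalg} to the purely algebraic assertion that the connected Lie subgroups $G_i$ associated with a tuple of Lie subalgebras $(\fg_i)$ topologically generate $G$ exactly when the Lie subalgebra $\fh := \langle \fg_i\rangle_{\mathrm{Lie}} \le \fg$ generated by their union is all of $\fg$. Once this equivalence is in place, Zariski-openness follows from dimension lower semicontinuity for polynomial operations on subspaces of $\fg$.

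For the equivalence, let $H \le G$ be the connected (immersed) Lie subgroup with Lie algebra $\fh$. A Baker--Campbell--Hausdorff argument shows the abstract subgroup generated by the $G_i$ coincides with $H$, so $\overline{\langle G_i\rangle} = \overline{H}$ and what remains is the implication $\overline{H}=G \Rightarrow \fh = \fg$. Since $hHh^{-1}=H$ for $h\in H$ we have $Ad_h(\fh)=\fh$; as the stabilizer of $\fh$ in $GL(\fg)$ is Zariski-closed and $Ad : G\to GL(\fg)$ continuous, the chain
\[
  Ad(G) \;=\; Ad(\overline{H}) \;\subseteq\; \overline{Ad(H)} \;\subseteq\; \mathrm{Stab}_{GL(\fg)}(\fh)
\]
forces $\fh$ to be $Ad(G)$-invariant, hence an ideal of $\fg$. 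Semisimplicity now enters: any such ideal is a direct summand $\bigoplus_{i\in S}\fs_i$ for some subset $S$ of the simple factors \Cref{eq:liealgdec}, and a short verification on the simply connected cover $\tilde G \cong \prod_j \tilde S_j$ -- the image of $K:=\ker(\tilde G\to G)\subseteq Z(\tilde G) = \prod_j Z(\tilde S_j)$ in the quotient $\tilde G/\tilde H \cong \prod_{j\notin S}\tilde S_j$ lies inside the discrete subgroup $\prod_{j\notin S}Z(\tilde S_j)$, so $\tilde H K$ is closed -- shows that the corresponding $H$ is itself closed in $G$. Hence $\overline{H}=H$, and $\overline{H}=G$ forces $H=G$, i.e.\ $\fh=\fg$.

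With the equivalence established, Zariski-openness is immediate. Define iteratively $W_0((\fg_i)):=\sum_i \fg_i$ and $W_{n+1}:=W_n+[W_n,W_n]$; each $W_n$ varies in an algebraic family of subspaces of $\fg$ parametrized by $\mathrm{Gr}(\fg)^k$ (the sum and bracket operations being polynomial in the $\fg_i$), so its fiber dimension is lower semicontinuous in the Zariski topology. The sequence stabilizes at $\langle \fg_i\rangle_{\mathrm{Lie}}$ in at most $\dim \fg$ steps for every tuple, so
\[
  \{(\fg_i)\in\mathrm{Gr}(\fg)^k : \dim W_{\dim\fg}((\fg_i)) = \dim\fg\}
\]
is Zariski-open in $\mathrm{Gr}(\fg)^k$, and its intersection with the subvariety of Lie-subalgebra tuples is precisely the topologically generating locus. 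The main obstacle is the reduction step, in particular the passage ``$\fh$ ideal $\Rightarrow H$ closed'', where both semisimplicity of $\fg$ and the linearity of $G$ are essential (entering through the discreteness of each $Z(\tilde S_j)$ inside its simple factor, which keeps the cover structure under control even when $\ker(\tilde G\to G)$ is infinite).
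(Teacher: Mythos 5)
Your proof is correct but follows a genuinely different route from the paper's. The paper dispatches \Cref{th:mostsubalg} in one line by replicating the proof of \Cref{th:zarop}, using \Cref{re:zarifconn} (only connected subgroups arise, so the discrete case drops out and Zariski-openness holds in general). That machinery obtains Zariski-closedness of the non-generating locus by exhibiting it inside the image of a projection from a projective family of ``bad'' Lie subalgebras --- proper positive-dimensional ones via \Cref{pr:genlie}, thin ones (\Cref{def:thn}) via \Cref{pr:surjevery} and \Cref{le:diagalg} --- and then invoking \Cref{le:grsisprop}. You instead isolate a clean reduction the paper never states explicitly: for tuples of Lie subalgebras, the underlying connected subgroups topologically generate $G$ precisely when $\langle\fg_i\rangle_{\mathrm{Lie}}=\fg$, proved by showing that $\overline{H}=G$ forces $\fh$ to be $Ad(G)$-invariant, hence an ideal and a direct summand of \Cref{eq:liealgdec}, which in turn makes $H$ itself closed. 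Combined with lower semicontinuity of the iterated-bracket dimension $\dim W_n$, this yields openness without the projective-scheme argument or the thin-subalgebra analysis. Your route is more self-contained and arguably simpler for this specific statement; the paper's is set up once and serves both \Cref{th:zarop} and \Cref{th:mostsubalg}.

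Two small remarks. First, the role you attribute to linearity in the passage ``$\fh$ ideal $\Rightarrow H$ closed'' is misplaced: $Z(\tilde S_j)$ is discrete for \emph{every} connected simple Lie group, linear or not, so closedness of $\tilde H K$ needs no linearity. (In fact your argument does not use linearity of $G$ anywhere: the Grassmannian $\mathrm{Gr}(\fg)$ and the polynomial recursion $W_n$ make sense for an arbitrary semisimple $\fg$.) Second, the phrase ``each $W_n$ varies in an algebraic family of subspaces'' should be handled with a little care, since $\dim W_n$ jumps; the precise statement is that on each connected component of $\mathrm{Gr}(\fg)^k$, over each Zariski chart trivializing the tautological sub-bundles, $W_n$ is the column span of a matrix whose entries are polynomial in the chart coordinates, so $\dim W_n$ is lower semicontinuous as the rank of that matrix. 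With that gloss the argument is complete.
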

\begin{proof}
  We can replicate the proof of \Cref{th:zarop}, via \Cref{re:zarifconn}: we are dealing here only with connected Lie groups.
\end{proof}

As for subgroups of $G$ (rather than Lie subalgebras of $\fg$), the approach we take is to fix $H_i\le G$, $i=1..k$ beforehand and parametrize tuples of conjugates
\begin{equation*}
  Ad_{g_i}H_i:= g_i H_ig_i^{-1},\ i=1..k
\end{equation*}
by tuples of elements $g_i\in G$. In these terms, we have the following consequence of \Cref{th:mostsubalg}.

\begin{corollary}\label{cor:twosbgps}
  Let $G$ be a connected, semisimple, linear Lie group and consider $k\ge 2$ connected Lie subgroups $H_i\le G$. The subspace
\begin{equation}\label{eq:genpairs}
  \{(g_i)\in G^k\ |\ Ad_{g_i}H_i\text{ generate $G$ topologically}\}\subseteq G^k
\end{equation}
is Zariski-open.
\end{corollary}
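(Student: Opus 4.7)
The plan is to realize~\eqref{eq:genpairs} as the preimage of the Zariski-open locus from \Cref{th:mostsubalg} under an algebraic morphism, and then invoke continuity of algebraic morphisms in the Zariski topology.

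Concretely, set $\fh_i:=Lie(H_i)\le \fg$ and consider the map
\begin{equation*}
  \varphi:G^k\longrightarrow \mathrm{Gr}(\fg)^k,\quad (g_i)_{i=1}^k\longmapsto \bigl(Ad_{g_i}\fh_i\bigr)_{i=1}^k.
\end{equation*}
The adjoint action $G\times \fg\to \fg$ is a morphism of $\bR$-varieties (it is polynomial in matrix entries when $G$ is realized inside some $GL_n$ as in \Cref{subse:alggps}), so the induced action on the Grassmannian $\mathrm{Gr}(\fg)$ is algebraic as well. Since each $\fh_i$ is a fixed subspace of $\fg$, each factor $g_i\mapsto Ad_{g_i}\fh_i$ is a morphism of $\bR$-varieties, and hence so is $\varphi$.

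Next, since $H_i$ is connected, the closed connected Lie subgroup of $G$ with Lie algebra $Ad_{g_i}\fh_i$ is precisely $Ad_{g_i}H_i=g_iH_ig_i^{-1}$. Consequently a tuple $(g_i)\in G^k$ lies in~\eqref{eq:genpairs} if and only if its image $\varphi(g_i)=(Ad_{g_i}\fh_i)$ lies in the subset of $\mathrm{Gr}(\fg)^k$ consisting of Lie subalgebra tuples whose underlying connected Lie subgroups topologically generate $G$.

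That target subset is Zariski-open by \Cref{th:mostsubalg} (with the understanding that the Lie subalgebras happen to all have prescribed dimensions $\dim \fh_i$, so we may as well intersect with the Zariski-closed product of Grassmannian strata $\prod_i\mathrm{Gr}(\dim\fh_i,\fg)$; this only sharpens openness in the ambient $\mathrm{Gr}(\fg)^k$). The preimage of a Zariski-open set under the morphism $\varphi$ is Zariski-open, yielding the claim. The only substantive point — and it is essentially bookkeeping — is verifying that $\varphi$ is algebraic; the heart of the argument is outsourced to \Cref{th:mostsubalg}.
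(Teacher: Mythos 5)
Your proof is essentially identical to the paper's: both define the morphism $(g_i)\mapsto(Ad_{g_i}\fh_i)$ from $G^k$ to $\mathrm{Gr}(\fg)^k$ and realize~\eqref{eq:genpairs} as the preimage of the Zariski-open set from \Cref{th:mostsubalg}. You simply spell out the algebraicity of the morphism, which the paper takes for granted.
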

\begin{proof}
  Denoting $\fh_i:=Lie(H_i)$, consider the (algebraic-variety) morphism
  \begin{equation*}
    G^k\ni (g_i)_{i=1}^k\mapsto (Ad_{g_i}\fh_i)_{i=1}^k\in \mathrm{Gr}(\fg)^k. 
  \end{equation*}
  The space \Cref{eq:genpairs} is the preimage through this map of the Zariski-open space of pairs of Lie algebras from \Cref{th:mostsubalg}, hence the conclusion.
\end{proof}

Because we will have to revisit this theme (of $G$ being generated by conjugates of subgroups) a number of times in the sequel, we fix some language.

\begin{definition}\label{def:gengen}
  Let $G$ be a linear, semisimple, connected Lie group and $\{G_i\}_{i\in I}$ a finite family of Lie subgroups thereof. We say that $G_i$ {\it generate $G$ generically} if the subset
  \begin{equation*}
    \{(g_i)_i\in G^I\ |\ Ad_{g_i}G_i\text{ generate $G$ topologically}\} \subseteq G^I
  \end{equation*}
  contains an open dense subset. We might specify the operative topology on either the word `topologically' or on $G^I$ by `e' or `z' lower-left subscripts, as before. So for instance we might say that
  \begin{itemize}
  \item $\{G_i\}$ ${}_e$generate ${}_z$generically (the default, if no decorations are used);
  \item or that they ${}_z$generate ${}_z$generically, etc. (four possible cases).
  \end{itemize}
  The same language applies to Lie subalgebras $\fg_i$ of $\fg:=Lie(G)$ by dispatching back to Lie groups: we say $\{\fg_i\}$ generate generically if their associated Lie subgroups $G_i\le G$ do.
\end{definition}

\section{Density}\label{se:dense}

We now turn to proving that the various sets of generating pairs discussed above are (under reasonable circumstances) non-empty and hence ``large'' by the results obtained in \Cref{se:open}.

We begin with the compact-group case, recovering a version of the aforementioned theorem of Auerbach (\cite[Theorem 6.82]{hm}).

The statement requires the following piece of terminology.

\begin{definition}\label{def:reg}
  Let $T\subseteq G$ be a maximal torus in a connected Lie group, and $S\subset T$ a subtorus. $S$ is
  \begin{enumerate}[(a)]
  \item\label{item:3} {\it regular (in $G$)} if $T$ is the only maximal torus containing it.
  \item\label{item:4} {\it strongly regular (in $G$)} if the restriction
    \begin{equation*}
      \cat{rep}_T\to \cat{rep}_S
    \end{equation*}
    induces a fully faithful functor on the full subcategory generated by the adjoint representation of $T$ on $Lie(G)$.
  \end{enumerate}
  Similarly, elements of $T$ or $Lie(T)$ are (strongly) regular if the trivial connected component of the Lie group they generate is.
\end{definition}

\begin{remark}
  Item \Cref{item:3} of \Cref{def:reg} is compatible with the literature; see e.g. \cite[Corollary 4.35]{ad-lie}, \cite[\S D.1]{fh}, \cite[\S III.3]{helg}, etc..
  
  On the other hand, part \Cref{item:4} requires more: it says that regarded as an $S$-representation, $Lie(G)$ has the same number of irreducible summands as it does over $T$, and the same number of isomorphism classes among those irreducible summands. Strong regularity is (in the Lie version) precisely the condition required of $X$ in \cite[Proposition 6.78, (i)]{hm}. In familiar Lie-theoretic terminology, assuming $G$ is compact:
  \begin{itemize}
  \item regularity for an element $X\in Lie(T)$ means no roots vanish on $X$;
  \item whereas {\it strong} regularity means no roots vanish and no two distinct roots are equal on $X$.
  \end{itemize}
\end{remark}

Note first that for the purpose of generating a group with strongly regular tori, we can move freely between Lie groups with the same Lie algebra:

\begin{lemma}\label{le:samelalg}
  Fix a positive integer $k$. Whether or not $k$ strongly regular tori generically of fixed dimensions $d_i$, $1\le i\le k$ generate a connected, semisimple linear Lie group $G$ depends only on the Lie algebra of $G$.
\end{lemma}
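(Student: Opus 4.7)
Every connected linear semisimple Lie group $G$ with $Lie(G)=\fg$ has finite center, so $\pi\colon G\to G_{\mathrm{ad}}:=\mathrm{Int}(\fg)$ is a finite central covering. Since $G_{\mathrm{ad}}$ depends only on $\fg$, it suffices to show that the property of being generically generated by strongly regular conjugate tori of prescribed dimensions $d_i$ is preserved under $\pi$; applying this to both $G_1$ and $G_2$ sharing $G_{\mathrm{ad}}$ then yields the lemma.

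\textbf{Matching the data on both sides.} Because $\ker\pi$ is finite, a connected Lie subgroup $H\le G$ with $Lie(H)=\ft\le \fg$ is a torus iff $\pi(H)\le G_{\mathrm{ad}}$ is (finite covers and continuous images of tori are tori). Strong regularity is determined by the adjoint action of $\ft$ on $\fg$, hence preserved, so $H\mapsto \pi(H)$ is a dimension-preserving bijection between strongly regular tori of $G$ and of $G_{\mathrm{ad}}$. Centrality of $\ker\pi$ gives $Ad_g=Ad_{\pi(g)}$ and therefore $\pi(Ad_g H)=Ad_{\pi(g)}\pi(H)$. Fixing strongly regular tori $H_i\le G$ with $\bar H_i:=\pi(H_i)$, and writing $\bar g_i:=\pi(g_i)$ for $(g_i)\in G^k$, I claim
\[
\{Ad_{g_i}H_i\}_i\text{ topologically generate }G\iff \{Ad_{\bar g_i}\bar H_i\}_i\text{ topologically generate }G_{\mathrm{ad}}.
\]
The forward direction is immediate upon applying $\pi$. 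For the converse, let $K$ be the subgroup of $G$ generated by $\bigcup_i Ad_{g_i}H_i$; as each $Ad_{g_i}H_i$ is path-connected through the identity, so are $K$ and $\overline{K}$. Since $\pi$ is closed (being finite, hence proper), $\pi(\overline{K})$ is a closed connected subgroup of $G_{\mathrm{ad}}$ containing every $Ad_{\bar g_i}\bar H_i$; by hypothesis $\pi(\overline{K})=G_{\mathrm{ad}}$, so $Lie(\overline{K})=\fg$, and $\overline{K}$ is open in $G$, hence equal to $G$ by connectedness.

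\textbf{Transferring genericity, and the main hurdle.} The map $\pi^k\colon G^k\to G_{\mathrm{ad}}^k$ is a finite covering, hence open, closed, continuous, and surjective in both Euclidean and Zariski topologies; consequently a subset $V\subseteq G_{\mathrm{ad}}^k$ contains a dense open subset iff $(\pi^k)^{-1}(V)\subseteq G^k$ does. Combined with the previous paragraph, this identifies the set of generating tuples in $G^k$ with the preimage of its counterpart in $G_{\mathrm{ad}}^k$ and completes the proof. The step I expect to require most care is the converse in the middle paragraph: deducing $\overline{K}=G$ from density of $\pi(K)$ in $G_{\mathrm{ad}}$ relies on $\pi$ being closed, $\overline{K}$ being connected, and the standard fact that a closed connected Lie subgroup of $G$ with Lie algebra $\fg$ must equal $G$; without these ingredients one could only conclude that $\overline{K}\cdot Z(G)=G$.
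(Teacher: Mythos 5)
Your proof is correct and takes essentially the same route as the paper: reduce to the finite central covering $\pi\colon G\to\mathrm{Int}(\fg)$, note that strongly regular tori and their conjugates correspond across $\pi$ with dimensions preserved, and show that topological generation transfers in both directions. The paper states these facts as two brief bullet observations; you supply the detailed verifications (compactness of the preimage torus, closedness of $\pi$ for the converse generation step, openness of $\pi^k$ for transferring genericity), which are exactly the points the paper leaves implicit.
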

\begin{proof}
  Let $\fg$ be the common Lie algebra of a family of Lie groups as in the statement. All such groups surject onto the adjoint group $\mathrm{Int}(\fg)$ with finite central kernel, and it remains to observe that
  \begin{itemize}
  \item (strongly) regular tori lift and descend through such surjections while retaining their dimension,
  \item and given such a finite-central-kernel surjection $G\to G_{small}$, the circles $H_i\subset G$ generate topologically if and only if their images in $G_{small}$ do. 
  \end{itemize}
  This concludes the argument.
\end{proof}

This then allows us to focus on simple Lie groups.

\begin{proposition}\label{pr:simpenough}
  Let $G$ be a semisimple, connected, linear Lie group and $k$ a positive integer. If every simple quotient of $G$ is generically generated by $k$ strongly regular circles then the same is true of $G$.
\end{proposition}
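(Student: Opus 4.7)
The plan is to build the sought $k$ strongly regular circles in $G$ as suitably weighted ``diagonal'' circles in the product decomposition of the adjoint group of $G$; to invoke \Cref{cor:twosbgps} and reduce the Zariski-density of the good conjugate set to the production of a single generating tuple; and then to rule out the graph-of-isomorphism obstruction of \Cref{le:surjevery}.

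First I would use \Cref{le:samelalg} to pass to $G$ being the adjoint group $\mathrm{Int}(\fg) = S_1 \times \cdots \times S_r$ with each $S_j := \mathrm{Int}(\fs_j)$ simple centerless. The hypothesis supplies strongly regular circles $H_i^{(j)} \le S_j$, $1 \le i \le k$, generically generating $S_j$; fix primitive generators $\xi_i^{(j)} \in Lie(H_i^{(j)})$. I would then pick positive rationals $c_i^{(j)}$ satisfying finitely many non-degeneracy constraints --- essentially requiring the cross-factor ratios $c_i^{(j)}/c_i^{(l)}$ ($j \ne l$) to avoid the values $\beta(\xi_i^{(l)})/\alpha(\xi_i^{(j)})$ for roots $\alpha, \beta$ of the relevant maximal tori --- and set
\[
  \eta_i := \sum_{j=1}^{r} c_i^{(j)}\xi_i^{(j)}, \qquad H_i := \overline{\exp(\bR\eta_i)} \le G.
\]
Rationality of the $c_i^{(j)}$ makes $\exp(\bR\eta_i)$ periodic, so $H_i$ is a circle inside the unique maximal torus $T = T^{(1)} \times \cdots \times T^{(r)}$ containing it (uniqueness by regularity of each $H_i^{(j)}$). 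The restrictions to $H_i$ of the roots of $T$ on $\fg$ are precisely the scalars $c_i^{(j)}\alpha^{(j)}(\xi_i^{(j)})$ (one per root $\alpha^{(j)}$), all nonzero and pairwise distinct by the choice of $c$'s combined with strong regularity of each $H_i^{(j)}$ within $S_j$; so $H_i$ is strongly regular in $G$.

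By \Cref{cor:twosbgps} the set of tuples $(g_i) \in G^k$ for which $(Ad_{g_i}H_i)_i$ generates $G$ topologically is Zariski-open, so to finish it is enough to produce a single generating tuple. I would choose $g_i^{(j)} \in S_j$ in the hypothesis-open locus where $(Ad_{g_i^{(j)}}H_i^{(j)})_i$ already generates $S_j$ and set $g_i := (g_i^{(1)}, \dots, g_i^{(r)})$. Let $\fh$ be the Lie algebra of $\overline{\langle Ad_{g_i}H_i \rangle}$. Since $[\fs_j,\fs_l]=0$ for $j \ne l$, iterated brackets of the generators $Ad_{g_i}\eta_i$ split summand-wise, so the Lie algebra homomorphism $\pi_j$ sends $\fh$ onto the Lie subalgebra of $\fs_j$ generated by $(Ad_{g_i^{(j)}}\xi_i^{(j)})_i$, which by construction is all of $\fs_j$. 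In particular $\fh$ surjects onto every simple factor of $\fg$.

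The main obstacle is now to exclude the possibility $\fh \lneq \fg$. If $\fh$ were proper, \Cref{le:surjevery} would deliver isomorphic simple summands $\fs_a \cong \fs_b$ together with a Lie algebra isomorphism $\varphi\colon \fs_a \to \fs_b$ such that $\fh$ maps into $\fs_a \oplus \fs_b$ as the graph of $\varphi$. Evaluated on the generators, this forces the system
\[
  Ad_{g_i^{(b)}}\xi_i^{(b)} \;=\; \frac{c_i^{(a)}}{c_i^{(b)}}\,\varphi\!\left(Ad_{g_i^{(a)}}\xi_i^{(a)}\right), \qquad i = 1, \dots, k.
\]
Each of these $k \ge 2$ equations pins $Ad_{g_i^{(b)}}\xi_i^{(b)}$ to a single point of a positive-dimensional adjoint orbit (of dimension $\dim S_b - \mathrm{rank}\,S_b$, by regularity), while $\varphi$ ranges over the $\dim\fs_b$-dimensional variety of Lie algebra isomorphisms $\fs_a \to \fs_b$. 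I would argue that this forces $(g_i^{(a)}, g_i^{(b)})$ into a constructible subset of $S_a^k \times S_b^k$ of positive codimension; taking a union over the finitely many isomorphic pairs $(a,b)$ still leaves a non-empty Zariski-open subset of the hypothesis-open locus, producing $(g_i)$ with $\fh = \fg$. Making the dimension count genuinely positive at the borderline case $k=2$ with $\fs_b$ of maximal rank-to-dimension ratio (such as $\mathfrak{su}(2)$, where the codimension is exactly $1$) is the step I expect to require the most care.
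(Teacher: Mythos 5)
Your route is genuinely different from the paper's, and I think worth comparing carefully. The paper's proof is a single paragraph: after reducing to the adjoint group via \Cref{le:samelalg}, it asserts that if $H_{i+}$ and $H_{i-}$ topologically generate $G_+$ and $G_-$ respectively, then the \emph{products} $H_{i+}\times H_{i-}$ topologically generate $G_+\times G_-$. That assertion is correct and admits a very short Goursat-style justification (any Lie subalgebra of $\fg_+\oplus\fg_-$ surjecting onto each factor is either everything or the graph of an isomorphism, and $\fh_{i+}\oplus\fh_{i-}$ contains vectors $(x,0)$, $(0,y)$ with $x,y\ne 0$, so it cannot sit in a graph). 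However, that argument produces subgroups that are $r$-tori for $r$ simple factors, not circles, and the paper does not spell out how to pass from there to the circle statement. Your rational-winding circle $H_i=\overline{\exp(\bR\eta_i)}$ is exactly the construction needed to close that gap, and your verification of its strong regularity (avoiding finitely many ratios in the choice of the $c_i^{(j)}$) is correct.

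The weak point, as you yourself flag, is the final dimension count ruling out the graph obstruction from \Cref{le:surjevery}. Dimension counts over $\bR$ are delicate (the real locus of an algebraic set can be empty or of smaller dimension than expected), and for $k=2$ with $\fs_b\cong\mathfrak{su}(2)$ you are down to codimension one. There is a cleaner way that avoids the count entirely and again exploits the freedom in choosing the $c$'s. Any Lie algebra isomorphism $\varphi\colon\fs_a\to\fs_b$ preserves the characteristic polynomial of $\mathrm{ad}$, so the graph equation $Ad_{g_i^{(b)}}\xi_i^{(b)}=(c_i^{(a)}/c_i^{(b)})\,\varphi(Ad_{g_i^{(a)}}\xi_i^{(a)})$ forces the multiset identity
\begin{equation*}
  \mathrm{spec}\bigl(\mathrm{ad}_{\xi_i^{(b)}}\bigr)\;=\;\tfrac{c_i^{(a)}}{c_i^{(b)}}\cdot\mathrm{spec}\bigl(\mathrm{ad}_{\xi_i^{(a)}}\bigr),
\end{equation*}
independently of $\varphi$ and of the $g$'s. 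Since $\xi_i^{(a)},\xi_i^{(b)}$ are regular, both sides are nonzero finite multisets, and only finitely many scalars $c_i^{(a)}/c_i^{(b)}$ can make them coincide. Choosing the $c_i^{(j)}$ rational, positive, and with those ratios (for all pairs $a,b$ of isomorphic summands and some fixed index $i$) avoiding this finite bad set --- which is compatible with the finitely many avoidance conditions you already need for strong regularity --- makes the graph equation \emph{unsolvable}, so the bad locus is empty rather than merely thin. That removes the dimension count and the constructibility bookkeeping over $\bR$ altogether, and the rest of your argument goes through.
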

\begin{proof}
  According to \Cref{le:samelalg} it is enough to assume that $G$ is the adjoint group $\mathrm{Int}(\fg)$ of its Lie algebra, in which case it decomposes as a product of simple components. Generic generation by strongly regular tori passes from two groups to their product, because if $H_{i+}$ and $H_{i-}$ topologically generate $G_+$ and $G_-$ respectively (for $1\le i\le k$) then similarly, $H_{i+}\times H_{i-}$ generate $G_+\times G_-$ topologically.
\end{proof}

We now turn to the main result of this section.

\begin{theorem}\label{th:strreggen}
  Let $G$ be a connected, semisimple, linear Lie group. For any two strongly regular circle subgroups $H_i\subset G$, $i=1,2$ the set \Cref{eq:genpairs} is ${}_z$open and ${}_e$dense.
\end{theorem}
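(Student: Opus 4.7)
The ${}_z$-openness is immediate from \Cref{cor:twosbgps}; what remains is non-emptiness of the set, from which ${}_e$-density follows automatically, since a non-empty ${}_z$-open subset of a connected real algebraic group is ${}_e$-dense. After translating in $G^2$ to set $g_1 = 1$, the task reduces to finding a single $g \in G$ such that $H_1$ and $Ad_g H_2$ topologically generate $G$.

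The reduction to simple $G$ proceeds in the spirit of \Cref{pr:simpenough}: in a decomposition $G = S_1 \times \cdots \times S_r$ the projections of strongly regular circles $H_1, H_2$ to each $S_i$ remain strongly regular (because characters of $\mathrm{Lie}(S_i)$-roots already separate on the original $H_j$), and if we can generate each $S_i$ by conjugates of those projections then the subgroup $\overline{\langle Ad_{g_1}H_1, Ad_{g_2}H_2\rangle}$ at least surjects onto every simple factor. Iterating the same step on every 2-factor quotient $S_i \times S_j$ and invoking the Helly-type \Cref{cor:hly} then promotes this to the full group $G$.

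In the simple case, fix generators $X_i \in Lie(H_i)$ and seek $g$ for which the real Lie subalgebra $\fl \subseteq \fg := Lie(G)$ generated by $X_1$ and $Y := Ad_g X_2$ equals $\fg$: the connected Lie subgroup with Lie algebra $\fg$ is then $G$ itself, sitting inside $\overline{\langle H_1, Ad_g H_2 \rangle}$. Let $T \subset K \subset G$ be the maximal torus of a maximal compact containing $H_1$ (unique by regularity), and $\fh \supseteq \ft := Lie(T)$ the maximally compact Cartan of \Cref{le:tormod}, so $\fg = \fh \oplus V$ with $V = \bigoplus V_\alpha$ a sum of real $2$-dim irreducible $T$-modules. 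In case \Cref{item:27} of \Cref{le:tormod} (non-complex $\fg$ with trivial Vogan automorphism) each $V_\alpha$ has multiplicity $1$ in $V$ and $\fa = 0$ (all roots are then imaginary, forcing $\fa$ into the center of the simple $\fg$), so $\fh = \ft$. For generic $g$ the projection $Y_\alpha$ of $Y$ onto every $V_\alpha$ is non-zero; strong regularity of $X_1$ makes the $ad(X_1)^2$-eigenvalues on the distinct $V_\alpha$'s pairwise distinct, so a Vandermonde argument in $Y, ad(X_1)^2 Y, ad(X_1)^4 Y, \ldots$ isolates each $Y_\alpha$. The pair $\{Y_\alpha, [X_1, Y_\alpha]\}$ spans $V_\alpha$ (since $ad(X_1)$ has no real eigenvector there), placing $V \subseteq \fl$, and the coroot-type brackets $[V_\alpha, V_\alpha] \subseteq \ft$ together span $\ft$.

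\textbf{Main obstacle.} The complex simple case, part \Cref{item:26} of \Cref{le:tormod}, is the chief hurdle: each $V_\alpha$ now appears in $V$ with multiplicity $2$, and $\fa \neq 0$, so purely real-polynomial $ad(X_1)$-separation only extracts the 4-dimensional isotypic pieces $V_\alpha \oplus V_{-\alpha}$, and the naive coroot brackets recover only $\ft$. The remedy would exploit the complex structure on $\fg$ (commuting with $ad(X_1)$) to distinguish individual complex root spaces $\fg_\alpha$ by their distinct complex eigenvalues $\alpha(X_1)$, after which the coroot brackets $[\fg_\alpha, \fg_{-\alpha}]$ span the complex Cartan $\fh^{\bC}$ and recover $\fh = \ft \oplus \fa$ upon descent to real forms. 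The residual case of non-complex $\fg$ with non-trivial Vogan automorphism falls outside \Cref{le:tormod} and would require its own root-space bookkeeping, though governed by the same spectral-separation principle.
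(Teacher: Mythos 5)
Your ${}_z$-openness reduction and the trivial-Vogan-automorphism case are correct and match the paper closely: you and the paper both observe that with multiplicity one in \Cref{le:tormod}\Cref{item:27}, a generic conjugate has non-vanishing components in every $V_\alpha$, strong regularity keeps the $V_\alpha$ non-isomorphic as $H_1$-modules, and from there one recovers $V$ and then $\ft$ via brackets. Your Vandermonde phrasing and the paper's ``$H_1$-invariance of $\fl$'' argument are essentially the same spectral-separation idea.

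The gap is exactly where you locate it, but your proposed remedy for the complex case does not work as stated. You want to use the $\bC$-linearity of $ad(X_1)$ to separate $\fg_\alpha$ from $\fg_{-\alpha}$; but $\fl$ is generated as a \emph{real} Lie subalgebra by $X_1$ and $Ad_g X_2$, and there is no reason for it to be stable under the ambient complex structure until \emph{after} one knows $\fl=\fg$. So ``distinguish $\fg_\alpha$ by its complex eigenvalue $\alpha(X_1)$'' is circular: the operator that would separate them is $\bC$-linear, but you only have access to the $\bR$-span of iterated brackets inside $\fl$, and within each 4-dimensional isotypic piece $\fg_\alpha\oplus\fg_{-\alpha}$ the module you can certify is only the 2-dimensional $H_1$-orbit $\{z a e_\alpha + \bar z b e_{-\alpha}\}$ of a mixed vector $ae_\alpha+be_{-\alpha}$. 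The paper's actual mechanism for escaping this 2-plane is different and concrete: using \Cref{eq:diffad}/\Cref{eq:tay} it shows that the pair $(a,b)$ can be steered through a dense subset of $\bC^2$, in particular so that $ab\notin\bR$; then the self-bracket
\begin{equation*}
[ae_\alpha+be_{-\alpha},\,aie_\alpha-bie_{-\alpha}]=-2abi\,h_\alpha
\end{equation*}
lies in $\fl$, and because $ab$ is not real this element's adjoint action does \emph{not} preserve $\mathrm{span}_\bR\{ae_\alpha+be_{-\alpha},\ aie_\alpha-bie_{-\alpha}\}$. Closing $\fl$ under brackets then forces it to swallow all of $\fg_{\pm\alpha}$. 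This is the idea your sketch is missing. Finally, the non-trivial Vogan-involution case is not a side remark: it requires a genuinely separate argument (the paper runs the same freedom-of-coefficients device on the mixed $\fk$-- and $\fp$--pieces $\mathrm{span}_\bR\{v_{\alpha+},w_{\alpha+}\}$ and $\mathrm{span}_\bR\{iv_{\alpha-},iw_{\alpha-}\}$, escaping the quadric $ps=qr$ of \Cref{eq:pqrs}); deferring it leaves the theorem unproved for, e.g., $\mathfrak{sl}(2n+1,\bR)$.

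A secondary issue: your reduction to simple $G$ via iterating over 2-factor quotients and invoking \Cref{cor:hly} needs to argue that the diagonal (thin) configurations from \Cref{def:thn} are avoided for generic $(g_1,g_2)$ when two simple factors are isomorphic; this is not automatic from generating each single factor. The paper instead reduces via \Cref{le:samelalg} and \Cref{pr:simpenough} (passing to the adjoint group, then to products of simple factors), so your route would need this extra genericity step made explicit.
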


According to \Cref{pr:simpenough} we can restrict attention to simple $G$, which case in turn breaks up into three sub-cases based on the Vogan-diagram classification summarized in \Cref{subse:ssliealg}:
\begin{itemize}
\item complex simple $\fg:=Lie(G)$ regarded as a real Lie algebra;
\item non-complex $\fg$ with trivial-automorphism Vogan diagram;
\item non-complex $\fg$ whose Vogan diagram has non-trivial involution. 
\end{itemize}

Some of those cases will be easier to handle than others.

\pff{th:strreggen-vogtriv}{th:strreggen}{: Vogan diagrams with trivial involution.}
\begin{th:strreggen-vogtriv}
  It suffices to prove the set non-empty: the density will then follow from its Zariski openness (\Cref{cor:twosbgps}). An additional simplification will have us work with subsets of $G$ rather than $G^2$: the automorphism
  \begin{equation*}
    G^2 \ni (g,h)\mapsto (g,g^{-1}h)\in G^2
  \end{equation*}
  maps \Cref{eq:genpairs} onto $G\times X$, where
  \begin{equation*}
    X:=\{g\in G\ |\ H_1\text{ and $Ad_g H_2$ generate $G$ topologically}\}. 
  \end{equation*}
  It will thus be enough to argue that $X$ (and hence $X\subseteq G$) is non-empty.

  Let $\ft\subset \fg$ be a maximally compact Cartan subalgebra, as discussed in \Cref{subse:ssliealg}; we may as well assume that $\ft$ contains the Lie algebra $\fh_1:=Lie(H_1)$. The hypothesis on Vogan diagrams having trivial automorphism groups ensures that in fact $\ft$ is compact, i.e. the Lie algebra of a torus $T\subset G$ of dimension $\mathrm{rank}(\fg^{\bC})$.

  Furthermore, per \Cref{le:tormod}\Cref{item:27}, the supplement $\fg\ominus \ft$ in the adjoint representation of $T$ on $\fg$ decomposes as a sum of {\it multiplicity-1} irreducible 2-dimensional representations $V_{\alpha}$, one for each positive root $\alpha\in \Delta^+$ attached to the Cartan subalgebra $\ft^{\bC}\subset \fg^{\bC}$. It will thus be enough to observe that some conjugate
  \begin{equation*}
    Ad_g(\fh_2:=Lie(H_2))
  \end{equation*}
  has non-trivial component in each $V_{\alpha}$: the Lie algebra of Lie group topologically generated by $H_1$ and
  \begin{equation*}
    \overline{\langle H_1, Ad_g(H_2)\rangle} = \overline{\langle H_1, \mathrm{exp}\left(Ad_g(\fh_2)\right)\rangle} 
  \end{equation*}
  will then contain all $\mathrm{exp}(V_{\alpha})$, which in turn generate $\fg$.

  Now, for each $\alpha$ the set of those $g\in G$ for which $Ad_g(\fh_2)$ has non-trivial $V_{\alpha}$ component in the decomposition
  \begin{equation*}
    \fg = \ft\oplus {\bigoplus}_{\alpha\in \Delta^+(\fg^{\bC},\ft^{\bC})}V_{\alpha}
  \end{equation*}
  is ${}_z$open, so the intersection of all of these sets will be non-empty if each individual one is; it thus suffices to focus on a single $\alpha$.

  To conclude as indicated,
  \begin{itemize}
  \item assume we initially have $\fh_2\subset \ft$ (possible after performing a conjugation);
  \item choose some basis $\{e\}$ for $\fh_1\cong \bR$;
  \item and a non-zero element $e_{\alpha}\in V_{\alpha}\cong \bR^2\cong \bC$.
  \end{itemize}
  Now note that
  \begin{equation*}
    ad_{e_{\alpha}}e = [e_{\alpha},e] = \alpha(e) e_{\alpha}\in V_{\alpha}
  \end{equation*}
  is a (complex) scalar multiple of $e_{\alpha}$. Since
  \begin{equation}\label{eq:diffad}
    ad_{e_{\alpha}}e = \lim_{t\to 0}\frac 1t\left(Ad_{g_t}e-e\right)
  \end{equation}
  for some one-parameter subgroup $t\mapsto g_t\in G$, $Ad_g e$ cannot all have vanishing $V_{\alpha}$-component.
\end{th:strreggen-vogtriv}

This portion of \Cref{th:strreggen} applies in particular to compact semisimple Lie groups, since the simple compact Lie algebras are precisely those whose Vogan diagrams have trivial automorphisms and {\it no} painted vertices (\cite[\S VI.10, discussion preceding table (6.101)]{knp}). In that case, we obtain a slight refinement of the Auerbach theorem (\cite[Theorem 6.82]{hm}):

\begin{corollary}\label{cor:auerb}
  Let $G$ be a compact, connected, semisimple Lie group. Then, the set of pairs that generate $G$ ${}_e$topologically is ${}_z$open and ${}_e$dense in $G^2$.
  \qedhere
\end{corollary}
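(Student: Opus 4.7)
The Zariski openness of the set of generating pairs is the specialization of Theorem~\ref{th:zarop}\Cref{item:15} to $k=2$, so the task reduces to proving Euclidean density. My first observation would be that, for $G$ connected compact and semisimple, any non-empty Zariski-open subset of $G^2$ is automatically Euclidean-dense: its complement is a proper Zariski-closed subset of the connected real-analytic manifold $G^2$, cut out by polynomial functions that do not vanish identically, and such a zero set has Lebesgue measure zero (hence is nowhere dense) by analytic continuation. So it suffices to produce a single pair $(g,h)\in G^2$ that topologically generates $G$.

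To exhibit such a pair, the plan is to bootstrap from the portion of Theorem~\ref{th:strreggen} just established. Every compact simple Lie algebra has a Vogan diagram with trivial involution and no painted vertices, so the trivial-involution case of the theorem applies to each simple factor of $G$; Proposition~\ref{pr:simpenough} then transfers the conclusion to all of compact semisimple $G$. Strongly regular circles in $G$ are in plentiful supply: a generic $X$ in the Lie algebra $\ft$ of a maximal torus $T\subseteq G$ satisfies $\alpha(X)\ne 0$ for all roots $\alpha$ and $\alpha(X)\ne \beta(X)$ for all pairs of distinct roots, and then $\overline{\exp(\bR X)}\subseteq T$ is a strongly regular circle.

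Fix two strongly regular circles $H_1,H_2\le G$ and, using the above, elements $g_1,g_2\in G$ for which $Ad_{g_1}H_1$ and $Ad_{g_2}H_2$ topologically generate $G$. Any circle subgroup admits topological generators (any element of infinite order will do), so choose $t_i\in Ad_{g_i}H_i$ with $\overline{\langle t_i\rangle}=Ad_{g_i}H_i$. Then
\begin{equation*}
\overline{\langle t_1,t_2\rangle}\ \supseteq\ \overline{\langle Ad_{g_1}H_1,\ Ad_{g_2}H_2\rangle}\ =\ G,
\end{equation*}
so $(t_1,t_2)$ is the desired generating pair. The only step not immediately mechanical is the measure-theoretic passage from non-empty Zariski-open to Euclidean-dense in the first paragraph; the rest is a direct assembly of results already in place.
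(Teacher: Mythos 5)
Your proof is correct and follows the same route the paper leaves implicit: combine the Zariski-openness of the set of generating element pairs from \Cref{th:zarop}\Cref{item:15} with non-emptiness supplied by the trivial-involution case of \Cref{th:strreggen} (applicable since compact simple Lie algebras have unpainted, involution-free Vogan diagrams, and \Cref{pr:simpenough} transfers this to the semisimple case), then pass from generating circles to generating elements by picking topological generators. The analytic-continuation step showing that a non-empty Zariski-open subset of the connected real-analytic manifold $G^2$ is Euclidean-dense is precisely the unstated fact the paper relies on when it marks the corollary as immediate.
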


Another consequence (recall the characteristic indices mentioned in \Cref{re:charind}): 

\begin{corollary}\label{cor:2cgen0}
  The characteristic index of a connected Lie group generated topologically by two circles can be arbitrarily large.  
\end{corollary}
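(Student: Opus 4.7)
The plan is to reduce the statement to exhibiting a single family of connected semisimple linear Lie groups $\{G_r\}_{r\ge 1}$ with characteristic indices tending to infinity and to which the portion of \Cref{th:strreggen} already proved applies; the theorem will then supply two (strongly regular) circles, which are in particular circles, whose conjugates topologically generate $G_r$.

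I would take $G_r:=SL_2(\bR)^r$. Each $G_r$ is connected, semisimple, and linear. The Iwasawa decomposition $SL_2(\bR)=KAN$ with $K=SO(2)$ and $AN$ a $2$-dimensional simply-connected solvable subgroup yields a homeomorphism $SL_2(\bR)\cong SO(2)\times \bR^2$; hence $c(SL_2(\bR))=2$, and $c(G_r)=2r$ by additivity of $c$ over products (visible on the Iwasawa decomposition of $G_r=K^r\cdot (AN)^r$), which is unbounded as $r\to\infty$.

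It remains to verify that the trivial-involution-Vogan case of \Cref{th:strreggen} just proved applies to each simple quotient of $G_r$. The Vogan diagram of $\fsl(2,\bR)$ is $A_1$, which admits no non-trivial diagram automorphism, with its unique (imaginary, non-compact) vertex painted; so $\fsl(2,\bR)$ falls squarely under the trivial-involution case already established. By \Cref{pr:simpenough}, generic generation by two strongly regular circles lifts from the simple quotients $SL_2(\bR)$ to $G_r$, producing in particular a pair of circle subgroups whose conjugates topologically generate $G_r$.

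No serious obstacle is anticipated: the hard work was done in \Cref{th:strreggen} and \Cref{pr:simpenough}, and the only delicate point is the case-membership check for $\fsl(2,\bR)$, which is immediate from the absence of diagram automorphisms of $A_1$.
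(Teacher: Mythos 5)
Your proof is correct and follows essentially the same route as the paper: take $G_r = SL_2(\bR)^r$, note that $\fsl(2,\bR)$ has Vogan diagram $A_1$ with trivial automorphism (so the already-established trivial-involution case of \Cref{th:strreggen} applies via \Cref{pr:simpenough}), and compute the characteristic index from the Iwasawa decomposition $SL_2(\bR)\cong\bS^1\times\bR^2$. Your arithmetic ($c(G_r)=2r$) is in fact the careful version of the paper's claim, which is phrased as if the characteristic index equaled the number of $SL(2)$ factors; either way the quantity is unbounded, which is all the corollary requires.
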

\begin{proof}
  Take $G$ to be, say, a product of copies of $SL(2,\bR)$. The latter's Vogan diagram consists of a single black vertex, so \Cref{th:strreggen} applies in its present partially-proven state; the characteristic index is the number of $SL(2)$ factors, as $SL(2,\bR)$ is homeomorphic to $\bS^1\times \bR^2$.
\end{proof}

\pff{th:strreggen-cplx}{th:strreggen}{: the complex case.}
\begin{th:strreggen-cplx}
  We fix $H_1$ throughout and seek to prove that it generates together with some conjugate of $H_2$. Let
  \begin{equation}\label{eq:rtdec}
    \fg = \fh\oplus \bigoplus_{\alpha}\fg_{\alpha}
  \end{equation}
  be the root-space decomposition, for a Cartan subalgebra $\fh$ that contains $\fh_1:=Lie(H_1)$. Fix $e_{\alpha}\in \fg_{\alpha}$ and $h_{\alpha}\in \fh$ spanning copies of $\mathfrak{sl}_2$, as usual (e.g. \cite[\S 8.3, Proposition]{hum}):
  \begin{equation*}
    [h_{\alpha},e_{\alpha}] = 2e_{\alpha},\quad [h_{\alpha},e_{-\alpha}] = 2e_{-\alpha},\quad [e_{\alpha},e_{-\alpha}] = h_{\alpha}.
  \end{equation*}
  
  Arguing as in the proof of the previously-treated case, a generic conjugate of $\fh_1$ will have non-trivial components in all $\fg_{\alpha}$. By \Cref{le:tormod}\Cref{item:26} (and its proof) $\fg_{\alpha}$ is irreducible with multiplicity 2 in the decomposition \Cref{eq:rtdec} of $H_1$-modules, its mirror summand being $\fg_{-\alpha}$. This means that generically, the Lie algebra
  \begin{equation*}
    \fl:=Lie\overline{\langle H_1,H_2\rangle}
  \end{equation*}
  will contain vectors of the form $ae_{\alpha}+be_{-\alpha}$ with (complex) $a,b\ne 0$ and the 2-dimensional $H_1$-modules they respectively generate. Furthermore, arguing as in the proof of the previous case of the theorem, we have quite a bit of freedom in our choice of $a$ and $b$: \Cref{eq:diffad} means that for small real $s,t$, arbitrary complex $c,d$ and $h\in \fh$ (where we can assume $\fh_2$ starts out, before conjugating by some generic $g\in G$) we have
  \begin{equation}\label{eq:tay}
    Ad_{\exp sde_{-\alpha}} Ad_{\exp tce_{\alpha}} h = h - t\alpha(h) c e_{\alpha} + s \alpha(h) d e_{-\alpha}\quad +\quad \text{higher terms},
  \end{equation}
  meaning higher in total degree in $s$ and $t$. This means that we can find conjugates of $\fh_2$ containing $ae_{\alpha}+be_{-\alpha}$ for $a,b$ ranging over a {\it dense} subset of $\bC^2$.
  
  Now, since (the adjoint action of) an element of $\fh_1$ scales $e_{\alpha}$ by some purely imaginary $t\in \bR i$ and $e_{-\alpha}$ by $-t$, that 2-dimensional module is simply
  \begin{equation*}
    \{z a e_{\alpha} + \overline{z}b e_{-\alpha}\ |\ z\in \bC\}.
  \end{equation*}  
  The Lie algebra $\fl$ will then also contain the element
  \begin{equation}\label{eq:abih}
    -2abi h_{\alpha} = [ae_{\alpha}+be_{-\alpha}, aie_{\alpha}-bie_{-\alpha}].
  \end{equation}
  The above-noted freedom in producing the element $ae_{\alpha}+be_{-\alpha}$ allows us to assume, for instance, that $ab\ne 0$ is not real. But then the adjoint action of \Cref{eq:abih} will not preserve
  \begin{equation*}
    \mathrm{span}_{\bR}\{ae_{\alpha}+be_{-\alpha}, aie_{\alpha}-bie_{-\alpha}\},
  \end{equation*}
  which in turn means that $\fl$ contains {\it both} $\fg_{\pm\alpha}$. This finishes the proof, given that the root spaces $\fg_{\alpha}$ generate $\fg$ as a Lie algebra.
\end{th:strreggen-cplx}

\pff{th:strreggen-ntriv}{th:strreggen}{: Vogan diagrams with non-trivial involution.}
\begin{th:strreggen-ntriv}
  We fix some notation:
  \begin{itemize}
  \item $\fg$ for the Lie algebra of $G$ and $\fh_{i}$ for those of $H_i$ respectively, as before;
  \item $(\fg,\theta,\fk,\fp)$ for a Cartan decomposition of $\fg$ as in \Cref{not:quad};
  \item $\fh\subset \fg$ for a maximally compact $\theta$-invariant Cartan subalgebra, with its attendant decomposition
    \begin{equation*}
      \fh = \ft\oplus \fa:=(\fh\cap \fk)\oplus(\fh\cap \fp)
    \end{equation*}
    and the standing assumption that $\fh_1\le \ft$;
  \item `$\bC$' superscripts for complexifications throughout, as in $\fg^{\bC}$, etc;
  \item $\Delta$ for the root system attached to $(\fg^{\bC},\fh^{\bC})$, with its positive half $\Delta^{+}$ chosen so as to play well with all of this data, as explained in the discussion following \Cref{not:quad}. 
  \end{itemize}
  The only reason we cannot apply the argument in the trivial-involution case is precisely the fact that for simple roots $\alpha$ with $\theta\alpha\ne \alpha$ (i.e. members of size-2 orbits under $\theta$: the novel feature of the present case) is that for each such root we have an irreducible 2-dimensional (real) $H_1$-subrepresentation of $\fg$, appearing with multiplicity {\it two}. The device that will rid us of this difficulty is very similar in spirit to the proof of the complex case, albeit with some minor notational inconveniences. 

  Fix a simple root $\alpha\in \Delta^+$ with $\theta\alpha\ne \alpha$. According to \cite[Theorem 6.88]{knp} (and its proof) the summands $\fk$ and $\fp$ can be re-assembled into a {\it compact} real form
  \begin{equation*}
    \fu:=\fk\oplus i\fp
  \end{equation*}
  of the complexification $\fg^{\bC}$. That compact form further decomposes as
  \begin{equation*}
    \fu = \sum_{\alpha\in \Delta}\bR ih_{\alpha} + \sum_{\alpha\in \Delta}\bR(e_{\alpha}-e_{-\alpha}) +  \sum_{\alpha\in \Delta}\bR i(e_{\alpha}+e_{-\alpha}),
  \end{equation*}
  where
  \begin{itemize}
  \item $h_\alpha$ and $e_{\pm\alpha}$ are elements of $\fg^{\bC}$ as in the preceding proof of the complex case: our $h_{\alpha}$ is generally scaled by a non-zero real scalar as compared to the $H_{\alpha}$ of \cite[Chapter VI]{knp} (e.g. \cite[\S IV.6]{knp}), but that scaling is irrelevant.
  \item $\theta$ is the extension of the original involution to $\fg^{\bC}$, it preserves $\fu$, and sends the simple-root vector $e_{\alpha}$ to
    \begin{equation*}
      \begin{cases}
        e_{\alpha}&\text{ if }\theta\alpha=\alpha\text{ and $\alpha$ is not painted};\\
        -e_{\alpha}&\text{ if }\theta\alpha=\alpha\text{ and $\alpha$ {\it is} painted};\\
        e_{\theta\alpha}&\text{ otherwise}.\\
      \end{cases}
    \end{equation*}
  \end{itemize}
  The two isomorphic 2-dimensional irreducible $H_1$-subrepresentations of $\fg$ corresponding to the roots $\alpha$ and $\theta\alpha$ will then be
  \begin{equation}\label{eq:vw}
    \mathrm{span}_{\bR}\{v_{\alpha+}, w_{\alpha+}\}\subset \fk
  \end{equation}
  and 
  \begin{equation}\label{eq:ivw}
    \mathrm{span}_{\bR}\{iv_{\alpha-}, iw_{\alpha-}\}\subset \fp,
  \end{equation}
  where
  \begin{align*}
    v_{\alpha}&:=e_{\alpha}-e_{-\alpha}\\
    w_{\alpha}&:=i(e_{\alpha}+e_{-\alpha})\text{ and }\\
    \bullet_{\alpha+}&:=\bullet_{\alpha}+\bullet_{\theta\alpha}\\
    \bullet_{\alpha-}&:=\bullet_{\alpha}-\bullet_{\theta\alpha}\\
  \end{align*}
  for $\bullet$ either `$v$' or `$w$'. Note that $[v_{\alpha},w_{\alpha}]=2ih_{\alpha}$ up to irrelevant scaling (e.g. \cite[proof of Theorem 6.11, p.354]{knp}).

  An isomorphism that will identify \Cref{eq:vw} and \Cref{eq:ivw} as real $H_1$-representations is given by
  \begin{equation*}
    v_{\alpha+}\mapsto iw_{\alpha-},\quad w_{\alpha+}\mapsto -iv_{\alpha-}.
  \end{equation*}
  Now, by the same reasoning employed twice already, via \Cref{eq:diffad}, the Lie algebra $\fl$ of a Lie group generated by $H_1$ and a generic conjugate of $H_2$ will contain
  \begin{equation*}
    \mathrm{span}_{\bR}\{a v_{\alpha+} + biw_{\alpha-},\ a w_{\alpha+} - biv_{\alpha-}\}
  \end{equation*}
  for non-zero real $a,b$. All of these spaces are contained in
  \begin{equation}\label{eq:pqrs}
    \{pv_{\alpha+}+q i w_{\alpha-}+r w_{\alpha+} + s (-iv_{\alpha-})\ |\ p,q,r,s\in \bR\text{ and }ps=qr\}.
  \end{equation}
  On the other hand, arguing as in the complex-case proof, i.e. by conjugating elements of $\ft$ with exponentials of elements in \Cref{eq:vw} and \Cref{eq:ivw} as in \Cref{eq:tay}, we can ensure that the component of some conjugate $Ad_g\fh_2$ in the direct sum of \Cref{eq:vw} and \Cref{eq:ivw} is {\it not} contained in \Cref{eq:pqrs}. As in the complex case, this will then imply that
  \begin{equation*}
    \fl=Lie\overline{\langle H_1, Ad_g H_2\rangle}
  \end{equation*}
  contains both \Cref{eq:vw} and \Cref{eq:ivw}. Extended to all simple roots in $\Delta^+$, this proves the claim: $\fl$ will contain all non-trivial $H_1$-submodules of $\fg$, which generate $\fg$ as a Lie algebra.
\end{th:strreggen-ntriv}


\def\polhk#1{\setbox0=\hbox{#1}{\ooalign{\hidewidth
  \lower1.5ex\hbox{`}\hidewidth\crcr\unhbox0}}}

\addcontentsline{toc}{section}{References}

\Addresses

\end{document}